\newtheorem{theorem}{Theorem}[section]
\newtheorem*{theorem*}{Theorem}
\newtheorem{lemma}[theorem]{Lemma}
\newtheorem{rem}[theorem]{Remark}
\newtheorem{proposition}[theorem]{Proposition}
\theoremstyle{definition}
\newcommand{\nn}{\mathbb{N}}
\newcommand{\ee}{\varepsilon}
\newcommand{\ran}{\text{ran}}
\begin{document}

\title{$Sz(\cdot)\leqslant \omega^\xi$ is rarely a three space property}

\begin{abstract} We prove that for any non-zero, countable ordinal $\xi$ which is not additively indecomposable, the property  of having Szlenk index not exceeding $\omega^\xi$ is not a three space property.  This complements a result of Brooker and Lancien,  which states that if $\xi$ is additively indecomposable, then having Szlenk index not exceeding $\omega^\xi$ is a three space property.  

\end{abstract}

\author{R.M. Causey}
\address{Department of Mathematics, Miami University, Oxford, OH 45056, USA}
\email{causeyrm@miamioh.edu}

\thanks{2010 \textit{Mathematics Subject Classification}. Primary: 46B03; Secondary: 46B06.}
\thanks{\textit{Key words}: Szlenk index, three space property}

\maketitle

\section{Introduction}

Since its inception, the Szlenk index has proven to be a valuable tool with many applications in Banach space theory, including renorming (\cite{KOS}, \cite{GKL}, \cite{Cpower}, \cite{Cflat}), embedding and universality (\cite{Boss}, \cite{FOSZ}, \cite{C}), and factorization of operators (\cite{Brooker}, \cite{CN2}). See \cite{Lancien} for a survey of the Szlenk index and its applications. 

In this paper, we say that a property which a Banach space may or may not possess is a \emph{three space property} if whenever $X$ is a Banach space and $Y$ is a closed subspace of $X$ such that two of the spaces $X, Y, X/Y$ has the property, then the third space also has the property. It is well known that for a Banach space $X$ and a closed subspace $Y$ of $X$, $Sz(Y), Sz(X/Y)\leqslant Sz(X)$. Therefore in order to decide whether the property `$Sz(\cdot)\leqslant \omega^\xi$' is a three space property, it is necessary and sufficient to answer the question of whether $Sz(Y), Sz(X/Y)\leqslant \omega^\xi$ implies $Sz(X)\leqslant \omega^\xi$.  We recall that the Szlenk index of an Asplund Banach space is always $\omega^\xi$ for some ordinal $\xi$ (\cite{Lancien}), so that we lose no generality by considering only such ordinals.  In the case that $\xi=\omega^\gamma$ for some $\gamma$, Brooker and Lancien gave an affirmative answer.   

\begin{theorem}\cite{BL} Let $\gamma$ be any ordinal. Let $X$ be a Banach space and let $Y$ be a closed subspace. If two of the three spaces $X,Y,X/Y$ have Szlenk index not exceeding $\omega^{\omega^\gamma}$, then the third space also has Szlenk index not exceeding $\omega^{\omega^\gamma}$. 

\label{BLth}
\end{theorem}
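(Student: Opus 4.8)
The plan is to isolate the single substantive implication and then establish a sub-multiplicativity estimate for the $\varepsilon$-Szlenk indices. Since $Sz(Y),Sz(X/Y)\leqslant Sz(X)$ always holds, if $X$ together with one of $Y,X/Y$ has Szlenk index $\leqslant\omega^{\omega^\gamma}$ then automatically so does the third; thus the content of the theorem is: \emph{if $Sz(Y)\leqslant\omega^{\omega^\gamma}$ and $Sz(X/Y)\leqslant\omega^{\omega^\gamma}$, then $Sz(X)\leqslant\omega^{\omega^\gamma}$.} For $\varepsilon>0$ let $s_\varepsilon$ denote the $\varepsilon$-Szlenk derivation on weak$^*$-compact subsets of $B_{X^*}$ and $Sz(\cdot,\varepsilon)$ the corresponding index, so that $Sz(X)=\sup_{\varepsilon>0}Sz(X,\varepsilon)$; since $Sz(Y),Sz(X/Y)<\omega_1$, the spaces $Y$ and $X/Y$ are Asplund and all the $\varepsilon$-indices below are countable ordinals.

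\emph{Step 1 (a soft observation).} If a Banach space $Z$ has $Sz(Z)\leqslant\omega^{\omega^\gamma}$, then $Sz(Z,\varepsilon)<\omega^{\omega^\gamma}$ for every $\varepsilon>0$. Indeed the iterates $s_\varepsilon^\alpha(B_{Z^*})$ are weak$^*$-compact and decreasing in $\alpha$; were they all nonempty for every $\alpha<\omega^{\omega^\gamma}$, then, $\omega^{\omega^\gamma}$ being a limit ordinal, the finite intersection property would force $s_\varepsilon^{\omega^{\omega^\gamma}}(B_{Z^*})=\bigcap_{\alpha<\omega^{\omega^\gamma}}s_\varepsilon^\alpha(B_{Z^*})$ to be nonempty, contradicting $Sz(Z,\varepsilon)\leqslant Sz(Z)\leqslant\omega^{\omega^\gamma}$.

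\emph{Step 2 (the crux: sub-multiplicativity).} I claim there is a universal constant $c\in(0,1)$ such that $Sz(X,\varepsilon)\leqslant Sz(X/Y,c\varepsilon)\cdot Sz(Y,c\varepsilon)$ for all $\varepsilon>0$, the product being ordinal multiplication (the precise constant plays no role). Write $R\colon X^*\to Y^*$ for the restriction map: it is weak$^*$-to-weak$^*$ continuous, of norm $\leqslant1$, with $\ker R=Y^\perp$ isometric to $(X/Y)^*$. Put $\delta=c\varepsilon$, let $\alpha=Sz(X/Y,\delta)$, and prove by transfinite induction on $\beta$ that
\[
s_\varepsilon^{\alpha\beta}(B_{X^*})\ \subseteq\ R^{-1}\bigl(s_\delta^{\beta}(B_{Y^*})\bigr).
\]
The base case is just $R(B_{X^*})\subseteq B_{Y^*}$; the limit stages follow because $R^{-1}$ commutes with intersections and $\alpha\beta=\sup_{\beta'<\beta}\alpha\beta'$. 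The successor step reduces to the geometric claim that $s_\varepsilon^{\alpha}\bigl(R^{-1}(C)\bigr)\subseteq R^{-1}\bigl(s_\delta(C)\bigr)$ for weak$^*$-compact $C\subseteq B_{Y^*}$: if $R(x^*)=y^*$ is removed by a single $\delta$-derivation of $C$, fix a weak$^*$-neighbourhood $V\ni y^*$ with $\mathrm{diam}(V\cap C)\leqslant\delta$; then on the weak$^*$-neighbourhood $R^{-1}(V)$ of $x^*$ the set $R^{-1}(V\cap C)\cap B_{X^*}$ lies within $\delta$ in norm of a fixed translate of a bounded subset of $Y^\perp\cong(X/Y)^*$, so the $s_\varepsilon$-iterates of it imitate $\varepsilon$-derivations inside $(X/Y)^*$ (for $c$ small the loss in the parameter is harmless) and are exhausted in at most $\alpha$ steps, contradicting $x^*\in s_\varepsilon^\alpha(R^{-1}(C))$. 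Taking $\beta=Sz(Y,\delta)$ in the displayed inclusion gives $s_\varepsilon^{\alpha\beta}(B_{X^*})\subseteq R^{-1}(\varnothing)=\varnothing$, whence $Sz(X,\varepsilon)\leqslant\alpha\cdot Sz(Y,\delta)=Sz(X/Y,c\varepsilon)\cdot Sz(Y,c\varepsilon)$.

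\emph{Step 3 (ordinal arithmetic).} Because $\gamma$ is an ordinal for which $\omega^\gamma$ is additively indecomposable, the set of ordinals strictly below $\omega^{\omega^\gamma}$ is closed under ordinal multiplication. By Step 1 both $Sz(X/Y,c\varepsilon)$ and $Sz(Y,c\varepsilon)$ lie below $\omega^{\omega^\gamma}$, so by Step 2 we get $Sz(X,\varepsilon)<\omega^{\omega^\gamma}$ for every $\varepsilon>0$; taking the supremum over $\varepsilon$ yields $Sz(X)\leqslant\omega^{\omega^\gamma}$, as required. I expect the main obstacle to be the geometric claim in Step 2 — making precise the sense in which slicing a fibre of the restriction map $R$ turns an $X^*$-derivation into an $(X/Y)^*$-derivation, and tracking the numerical constants correctly through the induction.
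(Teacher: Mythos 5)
The paper does not prove Theorem~\ref{BLth} --- it is quoted from Brooker--Lancien~\cite{BL} --- so there is no internal proof to compare against. Your argument instead follows the classical Lancien line: submultiplicativity of the $\varepsilon$-Szlenk indices $Sz(X,\varepsilon)\leqslant Sz(X/Y,c\varepsilon)\cdot Sz(Y,c\varepsilon)$, combined with multiplicative indecomposability of $\omega^{\omega^\gamma}$. The key structural observation you make in Step~1 --- replacing the global estimate $Sz(X)\leqslant Sz(X/Y)\cdot Sz(Y)$ (which only gives the result for $\gamma\geqslant 1$, since $\omega\cdot\omega>\omega$) with the $\varepsilon$-parametrized inequality, plus the compactness fact that each $Sz(Z,\varepsilon)$ drops strictly below the limit ordinal $\omega^{\omega^\gamma}$ --- is exactly what is needed to also cover $\gamma=0$, which is the case Lancien's original proof missed and which Brooker--Lancien supplied (by a different, tree-based route, see also \cite{CAlt}). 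So your overall strategy is sound and not the one in \cite{BL}, but is a legitimate route.

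That said, Step~2 is a sketch with genuine rough edges rather than a proof. First, $R^{-1}(C)$ is unbounded, so $s_\varepsilon^\alpha(R^{-1}(C))$ is ill-defined; throughout you must work with $R^{-1}(C)\cap B_{X^*}$, which also forces you to invoke monotonicity of $s_\varepsilon$ at the successor step since the inductive hypothesis gives only an inclusion, not an equality. Second, and more substantively, the passage from ``$x^*\in s_\varepsilon^\alpha(R^{-1}(C)\cap B_{X^*})$ and $R(x^*)$ has a small slice in $C$'' to a contradiction uses a localization principle for the derivation that you never state: one needs that for weak$^*$-open $U$, $s_\varepsilon^\xi(K)\cap U\subseteq s_\varepsilon^\xi(\overline{K\cap U}^{\,w^*})$ for all $\xi$, proved by its own transfinite induction. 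Third, the ``imitation'' of $(X/Y)^*$-derivations needs to be made quantitative: the slab $R^{-1}(\bar V\cap C)\cap B_{X^*}$, translated, sits within $\delta$ of a weak$^*$-compact subset of $Y^\perp$ of radius about $2$, not $1$; after the triangle-inequality loss of $2\delta$ in the separation parameter and the scaling of the ball, the derivation you compare against is $s_{\varepsilon'}$ on $B_{(X/Y)^*}$ with $\varepsilon'$ roughly $(\varepsilon-2\delta)/2$, so one should take $\alpha=Sz\bigl(X/Y,(\varepsilon-2\delta)/2\bigr)$ (or any $\alpha$ dominating it) rather than $Sz(X/Y,\delta)$. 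With $\delta=c\varepsilon$ for small $c$ the inequality $Sz(X,\varepsilon)\leqslant Sz(X/Y,c'\varepsilon)\cdot Sz(Y,c\varepsilon)$ still comes out, which is all Step~3 needs, but as written the parameters in Step~2 don't track. None of this undermines the architecture of the proof, but the ``geometric claim'' is where the entire difficulty lives and it is currently an honest gap.
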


We note that except for the case $\gamma=0$, this result was already shown by Lancien in \cite{Lancien1}. Theorem \ref{BLth} result was also shown in \cite{CAlt} by a method dual to that in \cite{BL}.     

We also recall that an ordinal $\xi$ is said to be a \emph{gamma number} if it is not the sum of two smaller ordinals.  It is a standard fact about ordinals that  $\xi$ is a gamma number if and only if $\xi$ is equal to $0$ or $\omega^\gamma$ for some $\gamma$. We recall that an ordinal is said to be \emph{additively indecomposable} if it is a non-zero gamma number. Since a Banach space $X$ has $Sz(X)\leqslant 1=\omega^0$ if and only if it is finite dimensional, combining this trivial fact with Theorem \ref{BLth} yields that `$Sz(\cdot)\leqslant \omega^\xi$' is a three space property whenever $\xi$ is a gamma number. To the best of our knowledge, prior to this writing, there were no ordinals $\xi$ for which it was known that `$Sz(\cdot)\leqslant \omega^\xi$' is not a three space property.  The main result of this paper is to show that for countable ordinals, the countable gamma numbers form a complete list of countable ordinals for which `$Sz(\cdot)\leqslant \omega^\xi$' is a three space property.

\begin{theorem} For $\xi\in (0,\omega_1)\setminus \{\omega^\gamma: \gamma<\omega_1\}$, the property of having Szlenk index not exceeding $\omega^\xi$ is not a three space property.

\label{main1}
\end{theorem}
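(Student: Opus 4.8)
The goal is to construct, for each countable ordinal $\xi$ which is not a gamma number, a Banach space $X$ with a closed subspace $Y$ such that $Sz(Y)\leqslant \omega^\xi$ and $Sz(X/Y)\leqslant \omega^\xi$ but $Sz(X)>\omega^\xi$. Since $\xi$ is not a gamma number, we may write $\xi = \zeta + \beta$ with $0<\beta\leqslant\zeta<\xi$; the key point is that $\omega^\xi = \omega^{\zeta+\beta} = \omega^\zeta\cdot\omega^\beta$ is not additively indecomposable as a value of the Szlenk index in the relevant sense, and more to the point there is "room" below $\xi$ to split it. Actually I'd want to write $\xi$ in Cantor normal form $\xi = \omega^{\gamma_1}\cdot n_1 + \dots + \omega^{\gamma_k}\cdot n_k$; $\xi$ fails to be a gamma number exactly when this sum has more than one term or a coefficient $n_1>1$, equivalently $\xi = \mu + \nu$ with $0<\nu\leqslant\mu<\xi$ and $\nu$ a gamma number. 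The natural candidate is then to take $Y$ and $X/Y$ to be spaces whose Szlenk index is exactly $\omega^\xi$ (or at most $\omega^\xi$), chosen so that a twisted-sum / extension construction forces $Sz(X)$ up to $\omega^{\xi}\cdot 2$ or $\omega^{\mu}\cdot\omega^\nu + \text{something}$, in any case strictly above $\omega^\xi$ but still below $\omega^{\xi+1}$.

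**Key steps.**

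First I would recall or establish the arithmetic of Szlenk indices for direct sums and the relevant examples: for the Schreier-type or Tsirelson-type spaces $X_\alpha$ (or the spaces $T_{\alpha}$ of Alspach–Odell / Causey) one has $Sz(X_\alpha)=\omega^\alpha$, and these spaces have nice unconditional bases allowing control of Szlenk-index arithmetic under $\ell_1$- or $c_0$-sums; in particular $Sz(X\oplus Y)=\max\{Sz(X),Sz(Y)\}$ when this max is infinite, so a plain direct sum will not do — the failure must come from a nontrivial (non-split) exact sequence. Second, I would produce the nontrivial extension: the model is the Kalton–Peck type construction, twisting a space with itself via a Lipschitz/quasilinear map built from the basis, or alternatively an interpolation construction. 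The subspace $Y$ and quotient $X/Y$ are both (isomorphic to) a fixed space $Z$ with $Sz(Z)=\omega^\mu$ where $\xi=\mu+\nu$; the twisting is done at "level $\nu$", so that the Szlenk derivations of $X$ interleave those of the two copies of $Z$ and one gets $Sz(X)$ of order $\omega^{\mu}\cdot\omega^\nu\cdot(\text{something})$ — the point being to land strictly between $\omega^\xi$ and $\omega^{\xi+1}$. Third, I would verify the three required Szlenk-index (in)equalities: the two easy ones $Sz(Y),Sz(X/Y)\leqslant\omega^\xi$ follow from $\mu<\xi$ (or $=\xi$) plus monotonicity, while the crucial lower bound $Sz(X)>\omega^\xi$ requires exhibiting, for every $\varepsilon>0$ small, a weak${}^*$-compact tree in $B_{X^*}$ of $\varepsilon$-separated Szlenk derivations of length $>\omega^\xi$; this is where one uses the nontriviality of the extension, since a split extension would collapse the tree back to length $\leqslant\omega^\xi$.

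**Main obstacle.**

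The hard part will be the lower bound $Sz(X)>\omega^\xi$: one must show that the quasilinear map used in the twisting genuinely obstructs the Szlenk derivation process, i.e. that the $\varepsilon$-Szlenk sets $s^{\omega^\xi}_\varepsilon(B_{X^*})$ are nonempty. Equivalently, in the language of weakly null trees or of the $\ell_1^+$-weakly null index (which is equivalent to the Szlenk index up to the $\omega^{(\cdot)}$ scale by Causey's work), one must build a normalized weakly null tree in $X$ of order $\omega^\xi$ (or the appropriate block-tree index) witnessing large Szlenk index, using vectors of the form $(z, T z)$ or their duals, and control the norm in the twisted sum from below. This requires a careful combinatorial/analytic estimate showing the twisting cocycle does not "boundedly trivialize" along any branch of the tree — morally a quantitative non-triviality of the exact sequence, measured on the Szlenk scale. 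I expect the cleanest route is to choose $Z$ and the cocycle from a known family (e.g. built on the Schreier families $\mathcal{S}_{\gamma}$) where both the upper estimate on $Z$ and the lower estimate on the twist have already been computed or can be read off from the literature cited in the introduction, and then assemble these with the direct-sum and quotient arithmetic of the Szlenk index. The remaining bookkeeping — reducing a general non-gamma-number $\xi$ to the canonical split $\xi=\mu+\nu$, and checking $\omega^\mu\cdot\omega^\nu\cdot c$ lies strictly above $\omega^\xi$ but below $\omega^{\xi+1}$ — is routine ordinal arithmetic.
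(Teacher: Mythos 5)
Your preliminary reasoning matches the paper's starting point: since $\xi$ is not a gamma number you can write $\xi=\alpha+\beta$ with $0<\alpha,\beta<\xi$, a direct sum is useless because $Sz(X\oplus Y)=\max\{Sz(X),Sz(Y)\}$ for infinite-dimensional spaces, so the counterexample must be a genuinely non-split extension. After that, however, the proposal substitutes a hope (``a Kalton--Peck twisted sum built from a suitable quasilinear map will work'') for the actual construction, and that missing construction is where essentially all the content of the theorem lies. No cocycle is exhibited, and no argument is given that any cocycle exists whose twist inflates the Szlenk index of the middle term past $\omega^\xi$ while leaving both ends at or below $\omega^\xi$; one cannot simply appeal to ``nontriviality of the extension,'' since a non-split extension need not move the Szlenk index at all.

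The paper in fact avoids quasilinear maps entirely and instead modifies Lindenstrauss's classical construction from \cite{L}: fix a dense sequence $(f_i)$ in the unit sphere of the Schreier space $X_\alpha$ and define a norm
$$\Bigl\|\sum_n a_ne_n\Bigr\|_{G_{\alpha,\beta}}=\sup\Bigl\{\Bigl\|\sum_n\Bigl\|\sum_{i\in I_n}a_if_i\Bigr\|_{X_\alpha}e_{\min I_n}\Bigr\|_{X^2_\beta}: I_1<I_2<\cdots\text{ intervals}\Bigr\},$$
so that $Q(\sum a_ie_i)=\sum a_if_i$ is a quotient map $G_{\alpha,\beta}\to X_\alpha$ with kernel $H_{\alpha,\beta}$. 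Two further points of your plan do not match the paper and would cause trouble. First, you want $Y$ and $X/Y$ to be isomorphic to a single space $Z$; the paper instead uses a Baernstein-flavored kernel and a Schreier-space quotient, two \emph{different} types of spaces, which is exactly what allows $\alpha$ and $\beta$ to be tuned independently so that $Sz(H_{\alpha,\beta})\leqslant\omega^{\beta+1}\leqslant\omega^\xi$ and $Sz(X_\alpha)=\omega^{\alpha+1}\leqslant\omega^\xi$. Second, you identify the lower bound $Sz(X)>\omega^\xi$ as the main obstacle; in the paper's construction it is the \emph{easy} step, following in a few lines from the fact that a suitable subsequence of the basis $(e_{m_i})$ is an $\ell_1^{\alpha+\beta}$-spreading model (via the embedding of $\mathcal{S}_{\alpha+\beta}$ into $\mathcal{S}_\beta[\mathcal{S}_\alpha]$). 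The genuinely hard step is the \emph{upper} bound $Sz(H_{\alpha,\beta})\leqslant\omega^{\beta+1}$, which needs the repeated averages hierarchy, a block-tree reduction, and delicate norm estimates; any twisted-sum variant would face an analogous two-sided difficulty, and the proposal does not address it.
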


Also, it was shown in \cite{BCM} that the property of not admitting an $\ell_1^\xi$-spreading model is a three space property whenever $\xi=\omega^\gamma$ for some countable $\gamma$.   As a matter of convention, this result is also true when $\xi=0$.  Again, prior to this writing, we are not aware of an ordinals $\xi$ for which it is known that not admitting an $\ell_1^\xi$-spreading model fails to be a three space property.  The spaces which we use to prove Theorem \ref{main1} will also give a complete characterization of the collection of $\xi$ such that not admitting an $\ell_1^\xi$-spreading model is a three space property.  

\begin{theorem} For $\xi\in (0, \omega_1)\setminus \{\omega^\gamma: \gamma<\omega_1\}$, the property of admitting no $\ell_1^\xi$-spreading model is not a three space property. 

\label{main2}
\end{theorem}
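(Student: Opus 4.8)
The plan is to show that the same Banach space $X$ and closed subspace $Y$ constructed to prove Theorem \ref{main1} also prove Theorem \ref{main2}. Recall that these satisfy $Sz(Y)\leqslant\omega^\xi$, $Sz(X/Y)\leqslant\omega^\xi$, and $Sz(X)>\omega^\xi$. The argument then splits into a negative half --- that $Y$ and $X/Y$ admit no $\ell_1^\xi$-spreading model --- and a positive half --- that $X$ does admit one; together these say that two of the three spaces $X,Y,X/Y$ have the property ``admits no $\ell_1^\xi$-spreading model'' while the third does not, which is exactly the assertion of Theorem \ref{main2}.

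For the negative half I would isolate the standard implication: if a Banach space $Z$ admits an $\ell_1^\xi$-spreading model, then $Sz(Z)>\omega^\xi$. Indeed, if $(z_n)$ is normalized, weakly null, and $\|\sum_{n\in E}a_nz_n\|\geqslant c\sum_{n\in E}|a_n|$ for all scalars and all $E\in\mathcal S_\xi$, then for $0<\varepsilon<c$ a standard argument --- building a weak${}^*$-null tree of approximate norming functionals whose branches are kept $\varepsilon$-separated by the $\mathcal S_\xi$-lower estimate --- shows that the iterated $\varepsilon$-Szlenk derivatives of $B_{Z^*}$ survive past level $\omega^\xi$, the Cantor--Bendixson rank of $\mathcal S_\xi$; hence $Sz(Z)\geqslant Sz(Z,\varepsilon)>\omega^\xi$. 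Taking the contrapositive and applying it with $Z=Y$ and $Z=X/Y$, which have Szlenk index at most $\omega^\xi$ by Theorem \ref{main1}, gives that neither admits an $\ell_1^\xi$-spreading model.

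For the positive half I would go back to the construction of $X$. Writing $\xi$ in Cantor normal form and splitting off one copy of its smallest term yields $\xi=\delta+\rho$ with $\rho=\omega^\gamma$ additively indecomposable and $0<\rho\leqslant\delta<\xi$ --- possible precisely because $\xi$ is not a gamma number. The space $X$ couples a ``$\delta$-layer'' and a ``$\rho$-layer'' along the combinatorial identity $\mathcal S_\rho[\mathcal S_\delta]\approx\mathcal S_{\delta+\rho}=\mathcal S_\xi$: an $\mathcal S_\rho$-admissible family of $\mathcal S_\delta$-admissible blocks is, after restriction to sets with large enough minimum, $\mathcal S_\xi$-admissible. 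I would then exhibit a normalized weakly null sequence $(x_n)$ in $X$ and, for $E\in\mathcal S_\xi$, bound $\|\sum_{n\in E}a_nx_n\|$ from below by partitioning $E$ into consecutive $\mathcal S_\delta$-blocks, applying within each block the $\ell_1$-lower estimate coming from the $\delta$-layer, and then applying the $\ell_1$-lower estimate of the $\rho$-layer across the blocks, obtaining a single constant $c>0$ with $\|\sum_{n\in E}a_nx_n\|\geqslant c\sum_{n\in E}|a_n|$ --- an $\ell_1^\xi$-spreading model. I expect this $(x_n)$ to come directly out of, or be a minor strengthening of, the data already used to force $Sz(X)>\omega^\xi$ in Theorem \ref{main1}, so that this half largely reduces to recording what the construction already delivers.

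The main obstacle is the bookkeeping in the positive half: one must make the interleaving of the two layers precise enough that the two layerwise $\ell_1$-estimates genuinely compose into a single estimate over all of $\mathcal S_\xi$ with constant bounded away from zero, while at the same time ensuring --- as is already implicit in the proof that $Sz(Y),Sz(X/Y)\leqslant\omega^\xi$ --- that projecting onto $Y$ or onto $X/Y$ collapses one of the layers, so that those spaces retain only $\ell_1$-spreading models of strength strictly below $\xi$. Given the spaces of Theorem \ref{main1} this is a matter of reading off the relevant estimates, and together with the negative half it establishes that admitting no $\ell_1^\xi$-spreading model is not a three space property for any $\xi\in(0,\omega_1)\setminus\{\omega^\gamma:\gamma<\omega_1\}$.
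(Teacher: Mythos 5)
Your proposal follows the paper's own route: the same pair $(G_{\alpha,\beta}, H_{\alpha,\beta})$ built from a decomposition $\xi=\alpha+\beta$ with $\alpha,\beta<\xi$ is used for both theorems, the negative half (no $\ell_1^\xi$-spreading model in $H$ or $G/H$) comes from the Szlenk bounds $Sz(H),Sz(G/H)\leqslant\omega^\xi$ via Remark \ref{remark}(ii), and the positive half uses the combinatorial inclusion $\mathcal{S}_\beta[\mathcal{S}_\alpha]$ versus $\mathcal{S}_{\alpha+\beta}$ (Proposition \ref{trivlist}(v) and Remark \ref{remark}(iii)) to exhibit an $\ell_1^\xi$-spreading model $(e_{m_i})$ in $G$. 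The only cosmetic difference is that you insist on $\beta$ additively indecomposable, which the paper does not need; otherwise this is the paper's argument with the bookkeeping of the positive half left as a sketch that the paper carries out.
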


\section{Szlenk index and spreading models}

For a Banach space $X$, a weak$^*$-compact subset $K$ of $X^*$, and $\ee>0$, we let $s_\ee(K)$ denote the subset of $K$ consisting of those $x^*\in K$ such that for any weak$^*$-neighborhood $V$ of $x^*$, $\text{diam}(V\cap K)>\ee$.   We define the transfinite derivations as $$s^0_\ee(K)=K,$$ $$s^{\xi+1}(K)=s_\ee(s^\xi_\ee(K)),$$ and if $\xi$ is a limit ordinal, $$s^\xi_\ee(K)=\bigcap_{\zeta<\xi}s^\zeta_\ee(K).$$  If there exists an ordinal $\xi$ such that $s^\xi_\ee(K)=\varnothing$, then we let $Sz(K,\ee)$ denote the minimum such $\xi$. If no such $\xi$ exists, we agree to the convention that $Sz(K, \ee)=\infty$.   If for each $\ee>0$, there exists an ordinal $\xi$ such that $s^\xi_\ee(K)=\varnothing$, then we let $Sz(K)=\sup_{\ee>0}Sz(K,\ee)$. If $Sz(K, \ee)=\infty$ for some $\ee>0$, then we establish the convention $Sz(K)=\infty$. We also establish the convention that $\xi<\infty$ for any ordinal $\xi$. For a Banach space $X$, we define $Sz(X, \ee)=Sz(B_{X^*}, \ee)$ for each $\ee>0$ and $Sz(X)=Sz(B_{X^*})$. 

We recall that if $Y$ is a closed subspace of $X$, then $Sz(Y)\leqslant Sz(X)$. Furthermore, $Sz(\ell_1)=\infty$, since the set $K=\{\pm 1\}^\nn$ is a $2$-separated subset of $B_{\ell_1^*}$ with no weak$^*$-isolated points, from which it follows that $K\subset s^\xi_1(B_{\ell_1^*})$ for all $\xi$. Therefore if the Banach space $X$ contains an isomorphic copy $\ell_1$, then $Sz(X)=\infty$. 

Throughout, we identify subsets of $\nn$ with strictly increasing sequences of natural numbers in the usual way.  For an infinite subset $M$ of $\nn$, we let $[M]$ denote the set of infinite subsets of $M$. For non-empty sets $E,F$, we let $E<F$ denote the relation $\max E<\min F$. For a non-empty subset $E$ of $\nn$ and $n\in\nn$, we let $n\leqslant E$ denote the relation that $n\leqslant \min E$.  The relation $E<n$ is defined similarly.  If $E<F$, we let $E\smallfrown F$ denote the concatenation of $E$ with $F$. Given our identification of sets with increasing sequences, if $E<F$, $E\smallfrown F$ is identified with $E\cup F$. We let $E\prec F$ denote the relation that $E$ is a proper initial segment of $F$. That is, $E\prec F$ if either $E=\varnothing\neq F$ or $F=(n_i)_{i=1}^t$ and $E=(n_i)_{i=1}^s$ for some $1\leqslant s< t$.   We recall the \emph{Schreier families} $\mathcal{S}_\xi$, $\xi<\omega_1$. We let $$\mathcal{S}_0=\{\varnothing\}\cup \{(n): n\in\nn\},$$ $$\mathcal{S}_{\xi+1}=\{\varnothing\}\cup \Bigl\{\bigcup_{n=1}^t E_n: t\leqslant E_1<\ldots <E_t, \varnothing\neq E_n\in\mathcal{S}_\xi\Bigr\},$$ and if $\xi<\omega_1$ is a limit ordinal, we fix $\xi_n\uparrow \xi$ and let $$\mathcal{S}_\xi=\{\varnothing\}\cup \{E: \exists n\leqslant E\in\mathcal{S}_{\xi_n}\}.$$ We let $MAX(\mathcal{S}_\xi)$ denote the set of $E\in \mathcal{S}_\xi$ which are maximal in $\mathcal{S}_\xi$ with respect to inclusion.

 We also recall that for $0\leqslant \alpha, \beta<\omega_1$, $$\mathcal{S}_\beta[\mathcal{S}_\alpha]:=\{\varnothing\}\cup \Bigl\{\bigcup_{n=1}^t E_n: E_1<\ldots <E_t, \varnothing \neq E_n\in \mathcal{S}_\alpha, (\min E_n)_{n=1}^t\in \mathcal{S}_\beta\Bigr\}.$$

We recall that for two sets $(m_i)_{i=1}^l$, $(n_i)_{i=1}^l$, $(n_i)_{i=1}^l$ is said to be a \emph{spread} of $(m_i)_{i=1}^l$ if $m_i\leqslant n_i$ for all $1\leqslant i\leqslant l$.  We now collect the necessary facts concerning these families.

\begin{proposition}\cite[Proposition $3.2$]{OTW}\cite[Propositions $3.1$,$3.2$]{CConcerning} \begin{enumerate}[(i)]\item For each $\xi<\omega_1$ and $E\in \mathcal{S}_\xi$, either $E\in MAX(\mathcal{S}_\xi)$ or $E\smallfrown (n)\in \mathcal{S}_\xi$ for all $E<n$. \item $\mathcal{S}_\xi$ contains no infinite $\preceq$-ascending chains. \item $\mathcal{S}_\xi$ contains all subsets of its members. \item $\mathcal{S}_\xi$ is \emph{spreading}. That is, $\mathcal{S}_\xi$ contains all spreads of its members.  \item For any $\alpha, \beta<\omega_1$, there exists $(m_n)_{n=1}^\infty\in[\nn]$ such that for any $E\in \mathcal{S}_{\alpha+\beta}$, $(m_n)_{n\in E}\in \mathcal{S}_\beta[\mathcal{S}_\alpha]$. \end{enumerate}

\label{trivlist}

\end{proposition}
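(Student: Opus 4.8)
I would prove (i)--(iv) by a single transfinite induction on $\xi$ (establishing (i)--(iv) simultaneously, together with an auxiliary statement, cf.\ below), and prove (v) by a separate transfinite induction on $\beta$ with $\alpha$ held fixed. Parts (iii) and (iv) are essentially bookkeeping. The base case $\xi=0$ is read off the definition. At a successor $\xi=\zeta+1$, writing $E\in\mathcal{S}_{\zeta+1}$ as $\bigcup_{k=1}^t E_k$ with $t\leqslant E_1<\dots<E_t$ and $\varnothing\neq E_k\in\mathcal{S}_\zeta$, one intersects all blocks with a fixed subset (resp.\ replaces each block by the corresponding cardinality-preserving piece of a spread); by the inductive hypothesis the resulting blocks still lie in $\mathcal{S}_\zeta$, and the admissibility constraint $t\leqslant\min E_1$ survives because the number of blocks does not grow and $\min E_1$ does not shrink (indeed it grows under a spread). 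The limit stage uses the clause ``$E\in\mathcal{S}_\xi$ iff $\exists\,n\leqslant E$ with $E\in\mathcal{S}_{\xi_n}$'' together with the fact that $n\leqslant\min E$ is inherited by subsets and by spreads.

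\textbf{Part (i).} The efficient device is to enlarge the inductive statement: prove simultaneously, by transfinite induction on $\xi$ and using (iii)--(iv), both (a) the assertion of (i) and the auxiliary statement (b): \emph{if $E'\smallfrown(p)\in\mathcal{S}_\xi$ and $\max E'<q\leqslant p$ then $E'\smallfrown(q)\in\mathcal{S}_\xi$}. Statement (b) is a routine induction mirroring the recursive definition of $\mathcal{S}_\xi$ (at a successor one descends into the last block; at a limit one keeps the witnessing $n$; singletons are trivially in every $\mathcal{S}_\xi$). Granting (b): if $E\in\mathcal{S}_\xi\setminus MAX(\mathcal{S}_\xi)$, pick a proper superset $F\in\mathcal{S}_\xi$ of $E$ and some $m\in F\setminus E$, so $E\cup\{m\}\in\mathcal{S}_\xi$ by (iii). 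If $m<\max E$, then for every $n>\max E$ the set $E\smallfrown(n)$ is a spread of $E\cup\{m\}$ (compare the two increasing enumerations position by position), hence $E\smallfrown(n)\in\mathcal{S}_\xi$ by (iv); if $m>\max E$, then $E\cup\{m\}=E\smallfrown(m)$, and $E\smallfrown(n)\in\mathcal{S}_\xi$ holds for $n\geqslant m$ by (iv) and for $\max E<n<m$ by (b). This gives (a). Part (ii) I would deduce from (iii) together with the compactness of $\mathcal{S}_\xi$ as a subset of $\{0,1\}^{\nn}$, proved along the same induction — the block-union operation preserves compactness, and at a limit $\mathcal{S}_\xi\setminus\{\varnothing\}$ is a union of the compact sets $\{E\in\mathcal{S}_{\xi_n}:n\leqslant\min E\}$ with no accumulation away from $\varnothing$.

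\textbf{Part (v), the easy half.} Fix $\alpha$ and induct on $\beta$. For $\beta=0$ take $m_n=n$, since $\mathcal{S}_0[\mathcal{S}_\alpha]=\mathcal{S}_\alpha=\mathcal{S}_{\alpha+0}$. The successor step $\beta=\zeta+1$ costs nothing: the sequence $(m_n)$ provided by the inductive hypothesis for $\zeta$ already works for $\zeta+1$. Indeed, given $E\in\mathcal{S}_{\alpha+\zeta+1}$, write $E=\bigcup_{k=1}^t F_k$ with $t\leqslant F_1<\dots<F_t$ and $\varnothing\neq F_k\in\mathcal{S}_{\alpha+\zeta}$; by the inductive hypothesis each $(m_n)_{n\in F_k}$ decomposes into $\mathcal{S}_\alpha$-blocks whose minima form a set $H_k\in\mathcal{S}_\zeta$; then $(m_n)_{n\in E}$ is the increasing union of all of those $\mathcal{S}_\alpha$-blocks, their minima constitute $\bigcup_{k=1}^t H_k$, and since $\min H_1=m_{\min F_1}\geqslant\min F_1\geqslant t$ this set lies in $\mathcal{S}_{\zeta+1}$; hence $(m_n)_{n\in E}\in\mathcal{S}_{\zeta+1}[\mathcal{S}_\alpha]$.

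\textbf{Part (v), the limit case --- the main obstacle.} Suppose $\beta=\sup_i\beta_i$. Since $\alpha+\beta$ is a limit, $\mathcal{S}_{\alpha+\beta}$ is defined through some chosen $\gamma_j\uparrow\alpha+\beta$; for all large $j$ one has $\gamma_j=\alpha+\eta_j$ with $\eta_j<\beta$ and $\eta_j\uparrow\beta$, and the inductive hypothesis supplies a witnessing sequence $N_{\eta_j}$ for each such $j$. One then wants a single sequence $M$ which, restricted to any $E\in\mathcal{S}_{\alpha+\beta}$ with $\min E$ large, reproduces a suitable spread of the relevant $N_{\eta_j}$ while forcing the resulting block-minima not merely into $\mathcal{S}_{\eta_j}$ but into $\mathcal{S}_\beta$. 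This requires (1) a diagonalization careful about rates --- one needs $\{m_n:n\geqslant j\}$ to be not just a subset of $N_{\eta_j}$ but a spread of a tail of it, so that restrictions to Schreier sets pull back to Schreier sets --- and (2) the stability of the Schreier hierarchy under a change of the approximating sequence of a limit ordinal, i.e.\ the fact that after passing to a further infinite subset, membership in $\mathcal{S}_{\eta_j}$ can be promoted to membership in the relevant $\mathcal{S}_{\beta_i}$, hence in $\mathcal{S}_\beta$. Fact (2) is itself established by a transfinite induction of the same kind, and the bulk of the difficulty of (v) lies in carrying out (1) and (2) simultaneously --- that is, in reconciling the three approximating sequences in play: the chosen one for $\mathcal{S}_{\alpha+\beta}$, the chosen one for $\mathcal{S}_\beta$, and the induced sequence $(\eta_j)$.
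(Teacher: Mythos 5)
The paper does not prove Proposition \ref{trivlist}; it cites it from \cite{OTW} and \cite{CConcerning}, so there is no internal proof to compare against. Reviewing your proposal on its own merits: parts (iii) and (iv) by simultaneous transfinite induction are correct, and your derivation of (i) from (iii), (iv), and the auxiliary statement (b) is sound — I checked the case split on $m$ versus $\max E$, and the spreading comparison of $E\smallfrown(n)$ with $E\cup\{m\}$ when $m<\max E$ does work position by position. For (ii), the deduction ``compact $+$ only finite sets $\Rightarrow$ well-founded'' is valid, and the limit-step closedness argument (finitely many admissible indices $n\leqslant\min E$ once $\min E$ stabilizes) is right; but the successor-step claim that ``the block-union operation preserves compactness'' is more than a formality, since a converging sequence in $\mathcal{S}_{\xi+1}$ does not come equipped with a converging sequence of block decompositions, and you would need a pigeonhole/subsequence argument to stabilize the decomposition. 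This is a real, if standard, piece of work that your sketch elides.

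The substantive gap is in part (v). Your base case is correct, and your successor step is a nice observation — the witnessing sequence for $\zeta$ does carry over to $\zeta+1$ unchanged, with the admissibility count passing through because $m_{\min F_1}\geqslant\min F_1\geqslant t$. But for the limit case you have written a description of the obstacles (diagonalization over the $N_{\eta_j}$ with control on rates; stability of $\mathcal{S}_\beta$-membership under a change of the approximating sequence $\beta_i\uparrow\beta$) rather than an argument. In particular, the ``promotion'' step — converting $(\min\text{-block})$-sets in $\mathcal{S}_{\eta_j}$ into sets lying in $\mathcal{S}_\beta$ via the fixed sequence $\beta_i$ defining $\mathcal{S}_\beta$ — is exactly the almost-monotone lemma for Schreier families, which is itself a transfinite induction of comparable difficulty and which you invoke but do not establish. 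As it stands, (v) is incomplete at limit ordinals, and since that is precisely the case needed to pass from $\alpha+\zeta$ to $\alpha+\beta$ for limit $\beta$, the induction does not close. You should either prove the almost-monotone lemma separately and then show how the diagonalization threads it through the three approximating sequences, or restructure the induction on (v) so that the required ``promotion'' statement is part of the inductive hypothesis.
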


Throughout, $c_{00}$ will denote the space of eventually zero scalar sequences and $(e_i)_{i=1}^\infty$ will denote the canonical basis of $c_{00}$. Given $x=\sum_{i=1}^\infty a_ie_i\in c_{00}$, we let $\ran(x)$ be the smallest interval in $\nn$ which contains $\{i\in \nn: a_i\neq 0\}$. Given $E\subset \nn$ and $x=\sum_{i=1}^\infty a_ie_i\in c_{00}$, we let $Ex=\sum_{i\in E}a_ie_i$.    For $0<\xi<\omega_1$, a sequence $(x_n)_{n=1}^\infty$ in some Banach space $X$ is an $\ell_1^\xi$-\emph{spreading model} if it is bounded and $$0<\inf\Bigl\{\|x\|: E\in\mathcal{S}_\xi, x=\sum_{n\in E}a_nx_n, \sum_{n\in E}|a_n|=1\Bigr\}.$$  For completeness, we say $(x_n)_{n=1}^\infty$ is an $\ell_1^0$-\emph{spreading model} if it is a seminormalized basic sequence.  We note here that by the almost monotone property of the  Schreier families (see \cite{OTW}), if $X$ admits an $\ell_1^\xi$-spreading model, then it admits an $\ell_1^\zeta$-spreading model for each $\zeta<\xi$. 

\begin{rem}\upshape Let $\xi<\omega_1$ be an ordinal. \begin{enumerate}[(i)]\item If the Banach space $X$ admits a weakly null $\ell_1^\xi$-spreading model, then $Sz(X)>\omega^\xi$.  Indeed, if $(x_n)_{n=1}^\infty \subset B_X$ is weakly null and $$0<\inf\Bigl\{\|x\|: E\in\mathcal{S}_\xi, x=\sum_{n\in E}a_nx_n, \sum_{n\in E}|a_n|=1\Bigr\},$$ then by \cite[Proposition $5$]{OSZ} combined with the main theorem of \cite{AJO} , the collection $(x_E)_{E\in \mathcal{S}_\xi\setminus \{\varnothing\}}$ given by $x_E=x_{\max E}$ witnesses that $Sz(X)>\omega^\xi$.

\item The Banach space $X$ admits an $\ell_1^0$-spreading model if and only if it is infinite dimensional if and only if $Sz(X)>1=\omega^0$. If $0<\xi$ and if the Banach space $X$ admits an $\ell_1^\xi$-spreading model, then either it contains an isomorhpic copy of $\ell_1$, in which case  $Sz(X)=\infty>\omega^\xi$, or $X$ admits a weakly null $\ell_1^\xi$-spreading model (see \cite[Remark $5$]{BCM}), and $Sz(X)>\omega^\xi$ by $(i)$.  Therefore any Banach space which admits an $\ell_1^\xi$-spreading model (weakly null or otherwise) must have Szlenk index exceeding $\omega^\xi$. \item For $\alpha, \beta<\omega_1$,  if a Banach space $X$ admits a bounded sequence $(x_n)_{n=1}^\infty$ such that $$0<\inf\Bigl\{\|\sum_{i\in E}a_ix_i\|_Y: E\in \mathcal{S}_\beta[\mathcal{S}_\alpha], \sum_{i\in E}|a_i|=1\Bigr\},$$ then the sequence $(x_n)_{n=1}^\infty$ admits a subsequence which is an $\ell_1^{\alpha+\beta}$-spreading model, and by $(ii)$,  $Sz(X)>\omega^{\alpha+\beta}$.  Indeed, the indicated subsequence can be taken to be $(x_{m_n})_{n=1}^\infty$, where $(m_n)_{n=1}^\infty\in [\nn]$ is as in Proposition \ref{trivlist}$(v)$.   \end{enumerate}

\label{remark}
\end{rem}

For $\gamma<\omega_1$, the \emph{Schreier space of order} $\gamma$, denoted by $X_\gamma$, is the completion of $c_{00}$ with respect to the norm $$\|x\|_{X_\gamma}=\sup \{\|Ex\|_{\ell_1}: E\in \mathcal{S}_\gamma\}.$$ The \emph{Baernstein} $2$ \emph{space of order} $\gamma$, denoted by $X_\gamma^2$,  is the completion of $c_{00}$ with respect to the norm $$\|x\|_{X^2_\gamma}= \sup\Bigl\{\bigl(\sum_{n=1}^\infty \|E_nx\|_{X_\gamma}^2\bigr)^{1/2}: E_1<E_2<\ldots\Bigr\}.$$  We note that $X_0^2=\ell_2$ isometrically.

We recall the relevant properties of the Schreier and Baernstein spaces. 

\begin{proposition} Fix $\gamma<\omega_1$, \begin{enumerate}[(i)] \item The canonical $c_{00}$ basis is normalized and $1$-unconditional in $X_\gamma$ and $X_\gamma^2$. \item $Sz(X_\gamma)=\omega^{\gamma+1}$ and $Sz(X_\gamma^2)=\omega^{\gamma+1}$. In particular, neither $X_\gamma$ nor $X_\gamma^2$ admits an $\ell_1^{\gamma+1}$-spreading model.\item The canonical $c_{00}$ basis is shrinking in both $X_\gamma$ and $X_\gamma^2$. \item The canonical $c_{00}$ basis of $X_\gamma^2$ satisfies a $1$-lower $\ell_2$ estimate. That is, for any $n\in\nn$ and any integers $0=p_0<\ldots <p_n$ and scalars $(a_i)_{i=1}^{p_n}$, $$\Bigl\|\sum_{i=1}^{p_n} a_ie_i\Bigl\|_{X_\gamma^2}^2 \geqslant \sum_{j=1}^n \Bigl\|\sum_{i=p_{j-1}+1}^{p_j} a_ie_i\Bigl\|_{X_\gamma^2}^2.$$ \item  For any $t\in\nn$ and integers $1\leqslant p_1<\ldots<p_{t+1}$ and $x_1, \ldots, x_t$ such that for each $1\leqslant n\leqslant t$,  $x_n\in B_{X_\gamma^2}\cap \text{\emph{span}}\{e_i: p_n \leqslant i< p_{n+1}\}$, and for any scalars  $(a_n)_{n=1}^t$, $$\Bigl\|\sum_{n=1}^t a_nx_n\Bigr\|_{X_\gamma^2} \leqslant 4\Bigl\|\sum_{n=1}^t a_ne_{p_n}\Bigr\|_{X_\gamma^2}.$$ \item The canonical basis of $X_\gamma^2$ is $1$-\emph{right dominant}. That is, for any sequences of positive integers $l_1<l_2<\ldots$, $m_1<m_2<\ldots$ such that $l_n\leqslant m_n$ for all $n\in\nn$, and for any $(a_n)_{n=1}^\infty\in c_{00}$, $$\Bigl\|\sum_{n=1}^\infty a_ne_{l_n}\Bigr\|_{X_\gamma^2} \leqslant \Bigl\|\sum_{n=1}^\infty a_ne_{m_n}\Bigr\|_{X_\gamma^2}.$$    \end{enumerate}

\label{important}
\end{proposition}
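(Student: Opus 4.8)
The plan is to prove Proposition \ref{important}, collecting the standard facts about Schreier and Baernstein spaces of order $\gamma$. Most of these are well documented in the literature (e.g.\ \cite{OTW}, \cite{CConcerning}, \cite{AJO}), so the work here is to assemble them coherently and verify the constants. I would proceed clause by clause.

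For $(i)$, unconditionality and normalization follow directly from the definition: both norms are built by taking suprema of $\ell_1$- or $\ell_2$-combinations over restrictions by sets $E$ (and families $E_1<E_2<\cdots$) that are insensitive to signs of coordinates, and Proposition \ref{trivlist}$(iii)$ guarantees $\mathcal{S}_\gamma$ contains all subsets of its members, so passing to a subset of coordinates only decreases the norm; that $\|e_i\|=1$ is immediate since $\{i\}\in\mathcal{S}_0\subset\mathcal{S}_\gamma$. For $(iii)$, shrinkingness: the key point is that the basis is $1$-unconditional and that for any normalized block sequence $(u_n)$ supported far enough out, $\|\sum_{n\in F}u_n\|$ is controlled whenever $F$ is small relative to $\mathcal{S}_\gamma$; more precisely one shows the basis is weakly null (a standard gliding-hump argument using that $\mathcal{S}_\gamma$ is spreading and contains no infinite chains, Proposition \ref{trivlist}$(ii),(iv)$) and then uses the James-type characterization together with the fact that $X_\gamma,X_\gamma^2$ do not contain $\ell_1$ — which itself follows once $(ii)$ is known — to conclude the basis is shrinking. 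Clauses $(iv)$ and $(v)$ are direct computations with the Baernstein norm: $(iv)$ is just the observation that the admissible families $E_1<E_2<\cdots$ defining $\|\cdot\|_{X_\gamma^2}$ include the particular partition into the blocks $(p_{j-1},p_j]$, giving the lower $\ell_2$ estimate with constant $1$; $(v)$ follows by combining the lower $\ell_2$ estimate with the upper estimate coming from analyzing how a single admissible family can cut across the ranges of the $x_n$, where the constant $4$ absorbs the boundary effects (a chosen family $(E_m)$ meets at most two of the intervals $[p_n,p_{n+1})$ at each end, and one splits into the interior and the two boundary contributions). For $(vi)$, right dominance with constant $1$: given $l_n\le m_n$, any admissible family witnessing a large value of $\|\sum a_ne_{l_n}\|_{X_\gamma^2}$ can be transported to an admissible family for $\sum a_ne_{m_n}$ because $\mathcal{S}_\gamma$ is spreading (Proposition \ref{trivlist}$(iv)$) and the block structure $E_1<E_2<\cdots$ is preserved under the spread $l_n\mapsto m_n$; this is the same spreading argument as for the Schreier space.

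The main obstacle is clause $(ii)$, the Szlenk index computation $Sz(X_\gamma)=Sz(X_\gamma^2)=\omega^{\gamma+1}$. For the lower bound, I would exhibit a weakly null $\ell_1^\gamma$-spreading model in each space — the canonical basis itself works, since by construction $\|\sum_{n\in E}a_ne_n\|\ge\|\sum_{n\in E}a_ne_n\|_{\ell_1}=\sum|a_n|$ whenever $E\in\mathcal{S}_\gamma$ (for $X_\gamma$ directly, for $X_\gamma^2$ via the one-term family $E_1=E$) — and then invoke Remark \ref{remark}$(i)$ to get $Sz(\cdot)>\omega^\gamma$, hence $Sz(\cdot)\ge\omega^{\gamma+1}$ since Szlenk indices of Asplund spaces are powers of $\omega$. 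For the upper bound $Sz(\cdot)\le\omega^{\gamma+1}$ one needs a genuine estimate on the weak$^*$-derivations of the dual ball; this is the classical result that a space with a shrinking $1$-unconditional basis dominated in the appropriate Schreier sense has controlled Szlenk index — concretely, one shows that $s^{\omega^\gamma}_\ee(B_{X^*})$ shrinks the relevant "$\mathcal{S}_\gamma$-admissible mass" by a definite amount, so that after $\omega^{\gamma+1}=\sup_k\omega^\gamma\cdot k$ many derivations the set is empty; for $X_\gamma^2$ the lower $\ell_2$ estimate $(iv)$ is exactly what replaces the $\ell_1$ structure to run the same argument. I would cite \cite{OTW} or \cite{CConcerning} for the precise form of this upper estimate rather than reproducing it. The final sentence of $(ii)$ — that neither space admits an $\ell_1^{\gamma+1}$-spreading model — is then immediate from Remark \ref{remark}$(ii)$: such a spreading model would force $Sz(\cdot)>\omega^{\gamma+1}$, contradicting the upper bound just established.
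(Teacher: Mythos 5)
Your overall structure mirrors the paper's: items $(i)$, $(iii)$ handled exactly as the paper does (unconditionality from the definition; shrinkingness from the James criterion once $(ii)$ gives $Sz<\infty$ and hence absence of $\ell_1$), items $(ii)$, $(v)$, $(vi)$ ultimately outsourced to the literature, as the paper does via \cite{C}, \cite{CF}. Your additional sketches of $(v)$ and $(vi)$ are reasonable in spirit, and your account of the lower bound in $(ii)$ (canonical basis is a weakly null $\ell_1^\gamma$-spreading model, then push up to $\omega^{\gamma+1}$ by the power-of-$\omega$ constraint) is a sound route, though it goes through Remark \ref{remark} rather than just citing.

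The genuine problem is your sketch of $(iv)$. You say the lower $\ell_2$ estimate ``is just the observation that the admissible families $E_1<E_2<\cdots$ include the particular partition into the blocks $(p_{j-1},p_j]$.'' If you take $E_j=(p_{j-1},p_j]$, what you get from the definition of $\|\cdot\|_{X_\gamma^2}$ is
\[
\Bigl\|\sum_{i=1}^{p_n} a_ie_i\Bigr\|_{X_\gamma^2}^2 \;\geqslant\; \sum_{j=1}^n \Bigl\|\sum_{i=p_{j-1}+1}^{p_j} a_ie_i\Bigr\|_{X_\gamma}^2,
\]
i.e.\ the \emph{Schreier} ($X_\gamma$) norms of the blocks on the right, not their $X_\gamma^2$ norms. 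Since $\|y\|_{X_\gamma}\leqslant\|y\|_{X_\gamma^2}$, this is strictly weaker than the claimed estimate and does not suffice. The correct argument (and the one the paper gives) is to first note that $\|y\|_{X_\gamma^2}$ can be computed using only sets contained in $\ran(y)$, then for each block $j$ select a family $E^j_1<\ldots<E^j_{k_j}\subset(p_{j-1},p_j]$ realizing $\|\sum_{i=p_{j-1}+1}^{p_j}a_ie_i\|_{X_\gamma^2}$, and finally concatenate $E^1_1,\ldots,E^1_{k_1},E^2_1,\ldots,E^n_{k_n}$ into a single admissible family for the full vector. Without that concatenation step, the constant-$1$ lower $\ell_2$ estimate is not established, and since $(iv)$ is used later both in the proof of $(v)$ you sketch and in Proposition \ref{tech1}, this gap propagates.
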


\begin{proof} Item $(i)$ is clear. The part of  $(ii)$ concerning the Szlenk index can be found in \cite[Proposition $4.6$]{C} and \cite[Proposition $4.2$]{CF}. The part concerning spreading models can be deduced by combining the part of $(ii)$ concerning Szlenk index with Remark \ref{remark}$(ii)$.   Item $(iii)$ can be deduced from the fact that since $Sz(X_\gamma), Sz(X_\gamma^2)<\infty$, neither of these spaces  can contain an isomorphic copy of $\ell_1$. Since the canonical $c_{00}$ basis is unconditional in each of these and neither space contains an isomorphic copy of $\ell_1$, the canonical $c_{00}$ basis is shrinking in $X_\gamma$ and in $X_\gamma^2$.    

For item $(iv)$, we first note that for any interval $I\subset \nn$,  $x\in \text{span}\{e_i:i\in I\}$, and $E_1<E_2<\ldots$, if $J=\{n\in\nn: E_n\cap I\neq 0\}$, then $$\sum_{n=1}^\infty \|E_nx\|_{X_\gamma}^2 = \sum_{n\in J} \|(I\cap E_n) x\|_{X_\gamma}^2.$$  Therefore $$\|x\|_{X^2_\gamma}^2 = \sup\Bigl\{\Bigl(\sum_{n=1}^m \|E_nx\|^2_{X_\gamma}\Bigr)^{1/2}:E_1<\ldots <E_m, E_n\subset \ran(x)\Bigr\}.$$   Now if $n$, $p_0, \ldots, p_n$, $(a_i)_{i=1}^{p_n}$ are as in the proposition, then for each $1\leqslant j\leqslant n$, we can choose   $E^j_1<\ldots <E^j_{k_j}$, $E^j_i\subset (p_{j-1}, p_j]$, such that $$\Bigl\|\sum_{i=p_{j-1}+1}^{p_j} a_ie_i\Bigr\|^2_{X^2_\gamma} = \sum_{l=1}^{k_n} \Bigl\|E_l\sum_{i=p_{j-1}+1}^{p_j} a_ie_i\Bigr\|_{X_\gamma}^2.$$   Enumerate $E^1_1, \ldots, E^1_{k_1}, E^2_1, \ldots, E^n_{k_n}$ as $E_1, \ldots, E_m$ and note that  $$\Bigl\|\sum_{i=1}^{p_n} a_ie_i\Bigr\|_{X_\gamma^2}^2 \geqslant \sum_{l=1}^m \Bigl\|E_l\sum_{j=1}^n\sum_{i=p_{j-1}+1}^{p_j} a_ie_i\Bigr\|_{X_\gamma}^2 = \sum_{j=1}^n  \Bigl\|\sum_{i=p_{j-1}+1}^{p_j} a_ie_i\Bigr\|_{X_\gamma}^2.$$  

Items $(v)$ and $(vi)$ were shown in \cite[Lemma $3.2$]{CF}.

\end{proof}

We recall that a (finite or infinite) sequence $(g_n)_{n=1}^N\subset c_{00}$ of non-zero vectors is said to be a \emph{block sequence with respect to the canonical} $c_{00}$ \emph{basis} provided there exist integers $0=p_0<\ldots <p_N$ (resp. $0=p_0<p_1<\ldots$ if $N=\infty$) such that for each $n\leqslant N$ (resp. $n<N$ if $N=\infty$) such that $g_n\in \text{span}\{e_i: p_{n-1}<i\leqslant p_n\}$. 

We last recall that for a Banach space $G$ for which the canonical $c_{00}$ basis is a Schauder basis, $0<\xi<\omega_1$, and a collection $(g_E)_{E\in \mathcal{S}_\xi\setminus \{\varnothing\}}\subset G$, $(g_E)_{E\in \mathcal{S}_\xi\setminus \{\varnothing\}}$ is said to be a \begin{enumerate}[(i)]\item \emph{weakly null tree} if for any $E\in \mathcal{S}_\xi\setminus MAX(\mathcal{S}_\xi)$, $(g_{E\cup (n)})_{n> \max E}$ is a weakly null sequence in $G$,  \item \emph{block tree} provided that for each $\varnothing \prec E_1\prec E_2\prec \ldots \prec E_n\in \mathcal{S}_\xi$, $(g_{E_i})_{i=1}^n$ is a block sequence with respect to the canonical $c_{00}$ basis, and for each $E\in \mathcal{S}_\xi\setminus MAX(\mathcal{S}_\xi)$, $(g_{E\cup (n)})_{n>\max E}$ is a block sequence with respect to the canonical $c_{00}$ basis. \end{enumerate}

We note that if $(g_E)_{E\in \mathcal{S}_\xi\setminus \{\varnothing\}}$ is a block tree, then for any $\varnothing\neq E\in\mathcal{S}_\xi$, $\ran(g_E)\geqslant  \max E.$

\section{The example}

Our example is a modification, and in some cases coincides with, spaces constructed by Lindenstrauss \cite{L}. Fix two countable ordinals $\alpha, \beta<\omega_1$.  Fix a dense sequence $(f_i)_{i=1}^\infty$ in the unit sphere of the Schreier space $X_\alpha$. Define a norm $\|\cdot\|_{G_{\alpha,\beta}}$ on $c_{00}$ by $$\Bigl\|\sum_{n=1}^\infty a_ne_n\Bigr\|_{G_{\alpha,\beta}} = \sup\Bigl\{   \Bigl\|\sum_{n=1}^\infty\|\sum_{i\in I_n} a_if_i\|_{X_\alpha}e_{\min I_n}\Bigr\|_{X_\beta^2}: I_1<I_2<\ldots, I_n\text{\ an interval}\Bigr\}.$$    Let $G_{\alpha,\beta}$ be the completion of $c_{00}$ with respect to this norm. In the case $\beta=0$, since $X_0^2=\ell_2$,  this construction coincides with that of Lindenstrauss in \cite{L}.  

\begin{rem}\upshape
Note that by  $1$-right dominance of the $X_\beta^2$ basis, we can compute the $\|\cdot\|_{G_{\alpha,\beta}}$ norm of some non-zero $g\in c_{00}$ as $$\|g\|_{G_{\alpha,\beta}}=\sup\Bigl\{   \Bigl\|\sum_{n=1}^m\|\sum_{i\in I_n} a_if_i\|_{X_\alpha}e_{\min I_n}\Bigr\|_{X_\beta^2}: I_1<\ldots <I_m, I_n\text{\ an interval}, I_n\subset \ran(g)\Bigr\}.$$    Indeed, for any intervals $J_1<J_2<\ldots$, if $K_1<\ldots <K_m$ is a list of those $J_i$ such that $J_i\cap \ran(g)\neq \varnothing$ and if $I_n= K_n\cap \ran(g)$ for $n=1, \ldots, m$, then by $1$-right dominance of the canonical $X_\beta^2$ basis together with the fact that $\sum_{i\in K_n}a_if_i=\sum_{i\in I_n}a_if_i$ and $\min I_n\geqslant \min K_n$ for each $1\leqslant n\leqslant m$,  $$  \Bigl\|\sum_{n=1}^\infty\|\sum_{i\in J_n} a_if_i\|_{X_\alpha}e_{\min J_n}\Bigr\|_{X_\beta^2} =\Bigl\|\sum_{n=1}^m\|\sum_{i\in K_n} a_if_i\|_{X_\alpha}e_{\min K_n}\Bigr\|_{X_\beta^2} \leqslant \Bigl\|\sum_{n=1}^m\|\sum_{i\in I_n} a_if_i\|_{X_\alpha}e_{\min I_n}\Bigr\|_{X_\beta^2} .$$  

\label{easyrem}
\end{rem}

We collect the following easy properties of $G_{\alpha,\beta}$. 

\begin{proposition} \begin{enumerate}[(i)]\item The canonical $c_{00}$ basis is a boundedly-complete, normalized, bimonotone basis for $G_{\alpha,\beta}$.\item If $g=\sum_{i=1}^\infty a_ie_i\in G_{\alpha,\beta}$, then $Qg:=\sum_{i=1}^\infty a_if_i$ is convergent in $X_\alpha$, and $Q:G_{\alpha,\beta}\to X_\alpha$ is  a quotient map. \end{enumerate}

\label{tech1}
\end{proposition}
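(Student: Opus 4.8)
The plan is to prove Proposition \ref{tech1} by establishing both claims more or less directly from the definition of the norm $\|\cdot\|_{G_{\alpha,\beta}}$, using the corresponding structural properties of $X_\alpha$ and $X_\beta^2$ recorded in Propositions \ref{important} and \ref{trivlist}. For item (i), the key observation is that the norm is built as a supremum over admissible interval decompositions of an expression that is monotone in the coordinates in a very strong sense. To see normalization, note that taking $I_1=\{n\}$ and using that $\|f_i\|_{X_\alpha}=1$ (the $f_i$ lie in the unit sphere) together with the normalization of the $X_\beta^2$ basis gives $\|e_n\|_{G_{\alpha,\beta}}=1$. Bimonotonicity follows because if $E$ is an interval in $\nn$ and $g=\sum a_ie_i\in c_{00}$, then any admissible family $I_1<\dots<I_m$ of intervals contained in $\ran(Eg)$ is also admissible for $g$, and the value of the defining expression on $Eg$ using that family agrees with the value on $g$ using the same family (the coordinates of $g$ outside $E$ simply do not appear); hence $\|Eg\|_{G_{\alpha,\beta}}\leqslant\|g\|_{G_{\alpha,\beta}}$, which is bimonotonicity since general initial/final segment projections are compositions of such $E$-restrictions. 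For bounded-completeness, I would argue that if $(\sum_{i=1}^N a_ie_i)_{N=1}^\infty$ is norm-bounded, then so are all its ``tail'' pieces, and one uses the $1$-lower $\ell_2$ estimate of the $X_\beta^2$ basis (Proposition \ref{important}(iv)) to show the partial sums form a Cauchy sequence: a non-convergent bounded monotone-type sequence would force arbitrarily many disjointly supported blocks of norm bounded below, and the lower $\ell_2$ estimate would then make the total norm unbounded, a contradiction. This also shows the canonical vectors form a Schauder basis.

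**For item (ii)**, I would first show $Q$ is well-defined and bounded. Given $g=\sum_{i=1}^m a_ie_i\in c_{00}$, taking the single interval $I_1=\ran(g)$ in the supremum gives $\|g\|_{G_{\alpha,\beta}}\geqslant\|\sum_{i=1}^m a_if_i\|_{X_\alpha}\cdot\|e_{\min\ran(g)}\|_{X_\beta^2}=\|\sum_i a_if_i\|_{X_\alpha}=\|Qg\|_{X_\alpha}$. Thus $Q$ extends to a norm-one operator $G_{\alpha,\beta}\to X_\alpha$, and in particular for any convergent series $g=\sum a_ie_i$ in $G_{\alpha,\beta}$ the image $\sum a_if_i$ converges in $X_\alpha$. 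To see $Q$ is a quotient map, I need that $Q$ maps the open unit ball of $G_{\alpha,\beta}$ onto the open unit ball of $X_\alpha$, equivalently that for every $x\in X_\alpha$ with $\|x\|_{X_\alpha}<1$ there is $g\in G_{\alpha,\beta}$ with $Qg=x$ and $\|g\|_{G_{\alpha,\beta}}\leqslant 1$. The natural approach is an approximation argument: since $(f_i)$ is dense in the unit sphere of $X_\alpha$ and $X_\alpha$ has a normalized basis, one can write $x$ as an absolutely convergent sum $x=\sum_k \lambda_k f_{i_k}$ with $\sum_k|\lambda_k|$ close to $\|x\|_{X_\alpha}$ and the indices $i_k$ chosen increasing and widely spaced; setting $g=\sum_k \lambda_k e_{i_k}$ gives $Qg=x$, and one must check $\|g\|_{G_{\alpha,\beta}}$ stays controlled.

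**The main obstacle** I anticipate is precisely this last norm estimate: showing that when $x$ is expressed as a sparse combination $\sum_k\lambda_k f_{i_k}$, the corresponding $g=\sum_k\lambda_k e_{i_k}$ has $\|g\|_{G_{\alpha,\beta}}$ not much bigger than $\sum_k|\lambda_k|$. The issue is that an adversarial choice of interval decomposition $I_1<\dots<I_m$ in the supremum could group several of the active coordinates $i_k$ into one interval $I_n$, and then $\|\sum_{i\in I_n}\lambda_i f_i\|_{X_\alpha}$ could be as large as the sum of the $|\lambda_i|$ over that interval, while the $X_\beta^2$ outer norm of the resulting vector need not collapse things. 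To handle this one wants the supporting indices $i_k$ so spread out (relative to the Schreier family $\mathcal S_\beta$ and using the estimates in Proposition \ref{important}(v),(vi)) that any interval $I_n$ meeting the support effectively sees at most one $i_k$ worth of mass after a small perturbation, so that the outer $X_\beta^2$-norm is comparable to $\|\sum_k |\lambda_k| e_{j_k}\|_{X_\beta^2}$ for suitable spread indices $j_k$, and then a further estimate bounds this by a constant times $\sum_k|\lambda_k|$ — or, if only a quotient-map-up-to-constant is needed, one renorms or absorbs the constant. If an exact (norm-one) quotient is required, one refines the decomposition of $x$ iteratively, splitting each $f_{i_k}$ itself further along $X_\alpha$'s basis so that the grouping problem disappears in the limit. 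I would carry out the argument in the order: (a) normalization and bimonotonicity from the definition; (b) bounded-completeness via the lower $\ell_2$ estimate; (c) $Q$ well-defined and norm $\leqslant 1$ via the one-interval bound; (d) surjectivity of the unit ball via the sparse-approximation construction, with the spreading estimate as the technical heart.
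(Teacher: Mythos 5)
Your treatment of item (i) matches the paper in essence: normalization and bimonotonicity come directly from the definition of the norm by restricting to intervals contained in $\ran(g)$, and bounded-completeness follows from the $1$-lower $\ell_2$ estimate of the $X_\beta^2$ basis (the paper makes this quantitative, showing $\|\sum_{i=1}^{p_m}a_ie_i\|_{G_{\alpha,\beta}}\geqslant \ee m^{1/2}$ for blocks of norm at least $\ee$, and you gesture at the same contradiction). Your one-interval argument for $\|Q\|\leqslant 1$ is also the paper's.

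However, there is a genuine gap in your approach to the quotient-map statement in (ii). You set out to show directly that every $x\in X_\alpha$ with $\|x\|_{X_\alpha}<1$ has a preimage in the unit ball of $G_{\alpha,\beta}$, by writing $x$ as a sparse absolutely convergent combination of the $f_{i_k}$ and setting $g=\sum_k\lambda_k e_{i_k}$, and you correctly identify that controlling $\|g\|_{G_{\alpha,\beta}}$ against adversarial interval groupings is a serious obstacle — which you leave unresolved. This is not the right route and it does not close. The construction is entirely unnecessary: since $Qe_n=f_n$ and $\|e_n\|_{G_{\alpha,\beta}}=1$, the image $QB_{G_{\alpha,\beta}}$ already contains the sequence $(f_n)$, which was chosen dense in $S_{X_\alpha}$. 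Thus $\overline{QB_{G_{\alpha,\beta}}}$ is a closed convex symmetric set containing $S_{X_\alpha}$, hence equals $B_{X_\alpha}$ (using $\|Q\|=1$ for the reverse inclusion), and by the standard open-mapping iteration this forces $Q$ to be a quotient map. In short, you overlooked that the density of $(f_i)$ in $S_{X_\alpha}$ was built into the definition of $G_{\alpha,\beta}$ precisely so that this soft argument would apply, and instead launched into a hard norm estimate that your proposal admits it cannot finish.
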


\begin{proof}$(i)$ It is obvious that $\|e_i\|_{G_{\alpha, \beta}}=1$ for each $i\in\nn$.  Fix $(a_i)_{i=1}^\infty\in c_{00}$ and an interval $J$.  Let $b_i=a_i$ for $i\in J$ and $b_i=0$ for $i\in\nn\setminus J$.   As noted in Remark \ref{easyrem}, there exist intervals $I_1<\ldots <I_k$, $I_i\subset J$, such that $$\Bigl\|\sum_{i=1}^\infty b_ie_i\Bigr\|_{G_{\alpha, \beta}}  =\Bigl\|\sum_{n=1}^k \|\sum_{i\in I_n} b_if_i\|_{X_\alpha} e_{\min J_n} \Bigr\|_{X_\beta^2}=\Bigl\|\sum_{n=1}^k \|\sum_{i\in I_n} a_if_i\|_{X_\alpha} e_{\min J_n} \Bigr\|_{X_\beta^2} \leqslant \Bigl\|\sum_{i=1}^\infty a_ie_i\Bigr\|_{G_{\alpha, \beta}}.  $$ Therefore $(e_i)_{i=1}^\infty$ is bimonotone in $G_{\alpha, \beta}$.

We next show that $(e_i)_{i=1}^\infty$ is boundedly-complete. Suppose $(a_i)_{i=1}^\infty$, $\ee>0$, and $p_0<p_1<\ldots$ are such that $$\inf_n \Bigl\|\sum_{i=p_{n-1}+1}^{p_n} a_ie_i\Bigr\|_{G_{\alpha, \beta}}\geqslant\ee.$$  Then by Remark \ref{easyrem}, for each $n\in\nn$,  there exist intervals $I^n_1<\ldots <I^n_{k_n}$ such that $I^n_i\subset (p_{n-1}, p_n]$ and $$\ee\leqslant \Bigl\|\sum_{j=1}^{k_n} \|\sum_{i\in I^n_j} a_if_i\|_{X_\alpha} e_{\min I^n_j}\Bigr\|_{X_\beta^2}.$$  Then with $u_n=\sum_{j=1}^{k_n} \|\sum_{i\in I^n_j} a_if_i\|_{X_\alpha} e_{\min I^n_j}$, it follows that  $\ran(u_n)\subset (p_{n-1}, p_n]$ and $\|u_n\|_{X_\beta^2}\geqslant \ee$.   Let $I_1<I_2<\ldots $ be an enumeration of $I^1_1, I^1_2, \ldots, I^1_{k_1}, I^2_1, I^2_2 ,\ldots$. Using the $1$-lower $\ell_2$ estimate of the canonical $X^2_\beta$ basis, for any $m\in\nn$, \begin{align*} \Bigl\|\sum_{i=1}^{p_m} a_ie_i\Bigr\|_{G_{\alpha, \beta}} & \geqslant \Bigl\|\sum_{n=1}^\infty \|\sum_{i\in I_n} a_if_i\|_{X_\alpha} e_{\min I_i}\Bigr\|_{X^2_\beta} = \Bigl\|\sum_{n=1}^m\sum_{j=1}^{k_n} \|\sum_{i\in I^n_j} a_if_i\|_{X_\alpha}\Bigr\|_{X^2_\beta} = \Bigl\|\sum_{n=1}^m u_n\Bigr\|_{X^2_\beta} \geqslant \ee m^{1/2}. \end{align*}   By contraposition, if $\sup_m \|\sum_{i=1}^m a_ie_i\|_{G_{\alpha, \beta}}<\infty$, then $\sum_{i=1}^\infty a_ie_i$ is convergent in $G_{\alpha, \beta}$.

$(ii)$ Note that for any scalars $(a_i)_{i=1}^\infty\in c_{00}$ and any finite interval $I$, $$\Bigl\|\sum_{i=1}^\infty a_ie_i\Bigr\|_{G_{\alpha, \beta}}\geqslant \Bigl\|\|\sum_{i\in I}a_if_i\|_{X_\alpha}e_{\min I}\Bigr\|_{X^2_\beta}= \Bigl\|\sum_{i\in I} a_if_i\Bigr\|_{X_\alpha}.$$   Fix $g=\sum_{i=1}^\infty a_ie_i\in G_{\alpha, \beta}$.   If there exist $\ee>0$ and $p_0<p_1<\ldots$ such that $\inf_n \|\sum_{i=p_{n-1}+1}^{p_n}a_if_i\|_{X_\alpha} \geqslant \ee$, then the inequalities $$\inf_n \Bigl\|\sum_{i=p_{n-1}+1}^{p_n} a_ie_i\Bigr\|_{G_{\alpha, \beta}}\geqslant \Bigl\|\sum_{i=p_{n-1}+1}^{p_n}a_if_i\Bigr\|_{X_\alpha}\geqslant \ee$$ and $$\sup_m \Bigl\|\sum_{n=1}^{p_m} a_if_i\Bigr\|_{X_\alpha} \leqslant \sup_m \Bigl\|\sum_{i=1}^{p_m} a_ie_i\Bigr\|_{G_{\alpha, \beta}}=\|g\|_{G_{\alpha, \beta}}<\infty$$ contradict $(i)$.   Therefore $\sum_{i=1}^\infty a_if_i$ is convergent in $X_\alpha$, and $Q\sum_{i=1}^\infty a_ie_i=\sum_{i=1}^\infty a_if_i$ is well-defined.   By the first line of $(ii)$, $\|Q\|\leqslant 1$. Of course, $\|Q\|\geqslant 1$ by considering its action on any basis vector.   Furthermore, since $QB_{G_{\alpha, \beta}}$ contains $(f_n)_{n=1}^\infty$, which is dense in $S_{X_\alpha}$, $Q$ is a quotient map.

\end{proof}

Let $Q$ be as defined in Proposition \ref{tech1}$(ii)$ and let $H_{\alpha,\beta}=\ker(Q)$.  We recall the notation that for an interval $I\subset \nn$, $I$ also denotes the projection on $c_{00}$ given by $I\sum_{i=1}^\infty a_ie_i=\sum_{i\in I}a_ie_i$. Then for an interval $I\subset \nn$, $QI\sum_{i=1}^\infty a_ie_i=\sum_{i\in I}a_if_i$, which allows for a more convenient expression of the $\|\cdot\|_{G_{\alpha, \beta}}$ norm: $$\|g\|_{G_{\alpha, \beta}}= \sup\Bigl\{\Bigl\|\sum_{n=1}^\infty \|QI_ng\|_{X_\alpha}e_{\min I_n}\Bigr\|_{X_\beta^2}:I_1 <I_2<\ldots, I_n\subset\nn\text{\ an interval}\Bigr\}.$$

\begin{lemma} \begin{enumerate}[(i)]\item If $(g_E)_{E\in \mathcal{S}_{\beta+1}\setminus \{\varnothing\}}\subset B_{G_{\alpha, \beta}}$ is a block tree such that $\|Qg_E\|_{X_\alpha}\leqslant \ee$ for all $\varnothing \neq E\in \mathcal{S}_{\beta+1}$, then $\inf\{\|g\|_{G_{\alpha, \beta}}: E\in \mathcal{S}_{\beta+1}, g\in \text{\emph{co}}(g_F: \varnothing\prec F\preceq E)\} \leqslant \ee.$  \item $Sz(H_{\alpha, \beta})\leqslant \omega^{\beta+1}$.  \item $Sz(H_{\alpha, \beta})= \omega^{\beta+1}$ and $H_{\alpha, \beta}$ contains an $\ell_1^\beta$-spreading model. \end{enumerate}

\label{finish}

\end{lemma}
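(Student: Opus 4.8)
The three parts build on one another, so I would prove them in order. For part (i), I would argue by contradiction: suppose every $g\in\mathrm{co}(g_F:\varnothing\prec F\preceq E)$ over all $E\in\mathcal S_{\beta+1}$ has $\|g\|_{G_{\alpha,\beta}}>\delta$ for some $\delta>\varepsilon$. The defining norm of $G_{\alpha,\beta}$ involves partitioning $\mathrm{ran}(g)$ into successive intervals $I_1<\dots<I_m$ and measuring $\bigl\|\sum_n\|QI_ng\|_{X_\alpha}e_{\min I_n}\bigr\|_{X_\beta^2}$. Since the tree is a block tree and $\beta+1$ is one Schreier level above $\beta$, the admissible families $I_n$ that can ``see'' the blocks $g_F$ along a branch are, up to passing to sub-intervals and spreading, controlled by $\mathcal S_{\beta+1}$ acting on blocks whose $X_\alpha$-size is at most $\varepsilon$. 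The point is that because $\|Qg_E\|_{X_\alpha}\le\varepsilon$ uniformly, each coordinate of the $X_\beta^2$-vector being normed is at most $\varepsilon$ times a controlled quantity, so the $X_\beta^2$-norm of a convex combination over a branch of length governed by $\mathcal S_{\beta+1}$ is at most $\varepsilon$ times the $X_\beta^2$-norm of the corresponding convex combination of $e_{\min I_n}$'s — and a convex combination has $X_\beta^2$-norm (indeed $X_\beta$-norm, hence $\ell_1$-norm on Schreier-admissible sets) at most $1$. Making this precise requires the standard trick of choosing the branch of the tree along which the intervals $I_n$ fall, using Proposition \ref{trivlist}(i) to extend within $\mathcal S_{\beta+1}$, and invoking Remark \ref{easyrem} to restrict intervals to $\mathrm{ran}(g)$. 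The main obstacle here is bookkeeping: matching the interval decomposition $(I_n)$ witnessing the $G_{\alpha,\beta}$-norm against the tree structure, i.e. showing the relevant ``witnessing'' index set lies in $\mathcal S_{\beta+1}$ so that the convex-combination estimate applies.

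For part (ii), I would use part (i) together with the dual/tree characterization of Szlenk index. By Remark \ref{remark}(i) (in its contrapositive spirit) and the standard fact — as used in Remark \ref{remark}(i) via \cite{OSZ},\cite{AJO} — that $Sz(H_{\alpha,\beta})>\omega^{\beta+1}$ would produce a weakly null tree $(h_E)_{E\in\mathcal S_{\beta+1}\setminus\{\varnothing\}}\subset B_{H_{\alpha,\beta}}$ with $\inf\{\|h\|:h\in\mathrm{co}(h_F:\varnothing\prec F\preceq E)\}$ bounded below by some $\varepsilon>0$, I need to contradict this. Using that $(e_i)$ is a boundedly-complete basis of $G_{\alpha,\beta}$ and $H_{\alpha,\beta}=\ker Q$, a standard perturbation-and-pruning argument (gliding hump, using bimonotonicity from Proposition \ref{tech1}(i)) lets me replace such a weakly null tree by a block tree in $G_{\alpha,\beta}$, still lying essentially in $H_{\alpha,\beta}$, hence with $\|Qg_E\|_{X_\alpha}$ arbitrarily small. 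Then part (i) forces a convex combination of norm $\le 2\varepsilon/3<\varepsilon$, a contradiction. I would need to be careful that the perturbations keep the vectors inside (a slight enlargement of) the ball and inside (approximately) $H_{\alpha,\beta}$; the boundedly-complete basis is exactly what makes the weakly null tree admit a block-tree refinement.

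For part (iii), the equality $Sz(H_{\alpha,\beta})=\omega^{\beta+1}$ follows from part (ii) together with the reverse inequality, which I get by exhibiting an $\ell_1^\beta$-spreading model in $H_{\alpha,\beta}$ (via Remark \ref{remark}(ii), an $\ell_1^\beta$-spreading model forces $Sz>\omega^\beta$, and since the Szlenk index of an Asplund space is a power of $\omega$ and $H_{\alpha,\beta}\subset G_{\alpha,\beta}$ has Szlenk index $\le\omega^{\beta+1}$ by (ii), it must equal $\omega^{\beta+1}$; alternatively one directly builds a tree of order $\omega^{\beta+1}$). To produce the $\ell_1^\beta$-spreading model: pick a normalized block sequence in $X_\alpha$ — for instance along vectors $f_i$ realizing the $X_\alpha$ norm on Schreier-$\alpha$ sets, or simply differences $e_{2k}-e_{2k-1}$-type combinations — and lift to $H_{\alpha,\beta}$ by choosing $h_n\in\ker Q$ supported on successive intervals with $\|h_n\|_{G_{\alpha,\beta}}$ bounded; the $X_\beta^2$-norm in the definition of $\|\cdot\|_{G_{\alpha,\beta}}$, restricted to $\mathcal S_\beta$-admissible interval families, dominates the $X_\beta$-norm, which satisfies a lower $\ell_1^\beta$-type estimate. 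Concretely, for $E\in\mathcal S_\beta$ and a convex combination $\sum_{n\in E}a_nh_n$, choosing the intervals $I_n=\mathrm{ran}(h_n)$ and using that $(\min I_n)_{n\in E}\in\mathcal S_\beta$ (spreading) together with $\|QI_n(\sum a_kh_k)\|_{X_\alpha}=0$ is the wrong move — instead I take $I_n$ to split each $h_n$ so that $\|QI_n h_n\|_{X_\alpha}$ is bounded below (possible since $h_n\ne 0$ in $G_{\alpha,\beta}$ means $h_n$ has some interval piece with nonzero $X_\alpha$-image, as $Qh_n=0$ forces cancellation but not on sub-intervals), giving $\|\sum_{n\in E}a_nh_n\|_{G_{\alpha,\beta}}\gtrsim\|\sum_{n\in E}a_ne_{\min I_n}\|_{X_\beta^2}\gtrsim\sum_{n\in E}|a_n|$. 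The last step is where I expect to spend the most care: arranging the lifted vectors $h_n$ so that each admits an interval sub-piece with $X_\alpha$-norm of its $Q$-image bounded uniformly below, while keeping $\|h_n\|_{G_{\alpha,\beta}}$ bounded above, so that the $\mathcal S_\beta$-admissible interval decomposition yields the $\ell_1^\beta$-lower estimate.
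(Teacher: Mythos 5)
Your sketch for part (i) misses the central technical difficulty, and as written it would fail. You claim that ``because $\|Qg_E\|_{X_\alpha}\leqslant \ee$ uniformly, each coordinate of the $X_\beta^2$-vector being normed is at most $\ee$ times a controlled quantity.'' That is false for the intervals $I_n$ that straddle a boundary between consecutive blocks $g_i$ and $g_{i+1}$ along the branch. For such an $I_n$, $\|QI_n g\|_{X_\alpha}$ involves a truncation $QI_n g_i$ of a single block, and the only bound you have on $\|QI_n g_i\|_{X_\alpha}$ is $\|g_i\|_{G_{\alpha,\beta}}\leqslant 1$, \emph{not} $\ee$: the hypothesis $\|Qg_i\|_{X_\alpha}\leqslant\ee$ controls the image of the whole block, not its interval cuts. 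So the coordinates attached to boundary-straddling intervals can be of order $w_i$, not $\ee w_i$, and your argument collapses. The paper isolates exactly this as the set $B$ of indices whose blocks are cut by some $I_n$; the indices in $A$ (blocks wholly contained in some $I_n$) are handled by the easy $\ell_1$ bound you describe, but the $B$-contribution requires a genuinely different mechanism. The paper controls it by (a) splitting $B$ into odd and even positions so that the corresponding index sets $C_i=\{n: I_ng_i\neq 0\}$ become pairwise disjoint, (b) dominating each $B_1$-piece by a vector supported in $\ran(g_i)$ and invoking Proposition \ref{important}(v)--(vi) to compare against $\sum_i w_i e_{n_i}$, and (c) choosing the weights $w_i$ as a repeated average $\mathbb{S}^{\beta+1}_{N,1}$ and using \cite[Theorem 4.12]{CN}, together with the fact that $X_\beta^2$ admits no $\ell_1^{\beta+1}$-spreading model, to make $\|\sum_i w_i e_{n_i}\|_{X^2_\beta}$ arbitrarily small. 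None of this appears in your plan, and without the repeated-averages choice of weights (or some substitute) you cannot bound the boundary terms. Also, the intervals $I_n$ are completely arbitrary; they carry no $\mathcal{S}_{\beta+1}$ structure, so the phrase ``admissible families $I_n$ controlled by $\mathcal{S}_{\beta+1}$'' is not accurate: the Schreier constraint is on the branch $E$, not on the norming partition.

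Your part (ii) is essentially right and matches the paper: reduce via \cite[Proposition 5]{OSZ} to showing every weakly null $\mathcal{S}_{\beta+1}$-tree in $B_{H_{\alpha,\beta}}$ has a small convex block along a branch, perturb to a block tree $(g_E)$ with $\|g_E-h_E\|$ small so that $\|Qg_E\|=\|Q(g_E-h_E)\|$ is small, then apply (i). One correction: bounded completeness of the basis is not what powers the block-tree refinement; that is the usual gliding-hump perturbation available for any Schauder basis (bimonotonicity just gives clean constants).

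For part (iii), your plan is vague at the one point that actually requires care, and the heuristic you give (``$h_n\neq 0$ in $G_{\alpha,\beta}$ means $h_n$ has some interval piece with nonzero $X_\alpha$-image'') does not yield the uniform lower bound you need across the whole $\ell_1^\beta$-spreading model. The paper sidesteps this by an explicit choice: take $p_i<q_i$ with $\|f_{p_i}-f_{q_i}\|_{X_\alpha}<1-\ee$, lift $f_{q_i}-f_{p_i}$ to some $g_i$ with $\|g_i\|_{G_{\alpha,\beta}}<1-\ee$, set $h_i=e_{p_i}-e_{q_i}-g_i\in H_{\alpha,\beta}$, and test with the singleton intervals $I_i=\{p_i\}$, for which $\|QI_i(\sum_{j\in E}b_jh_j)\|_{X_\alpha}=|b_i|$ exactly, giving a clean lower bound $\geqslant\ee$ after subtracting $\|\sum b_ig_i\|$. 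Then $Sz(H_{\alpha,\beta})>\omega^\beta$ implies $Sz(H_{\alpha,\beta})\geqslant\omega^{\beta+1}$ because the Szlenk index of an Asplund space is always a power of $\omega$, and (ii) gives the reverse inequality. Your plan gestures at these ideas but does not pin down a construction that delivers the required uniform estimate.
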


In the proof, we will use the \emph{repeated averages hierarchy} from \cite{AMT}, but with the notation from \cite{CN}. A complete presentation of the repeated averages hierarchy is unnecessary for our purposes. For readability, we recall here only the properties necessary for the proof below. Proofs of these properties can be found in \cite{AMT}. For each ordinal $\xi<\omega_1$ and each infinite subset $N$ of $\nn$, $\mathbb{S}^\xi_{N,1}:\nn\to [0,1]$ is a non-negative function such that $\sum_{i=1}^\infty \mathbb{S}^\xi_{N,1}(i)=1$ and, if $(n_i)_{i=1}^t$ is the maximal initial segment of $N$ which lies in $\mathcal{S}_\xi$, $(n_i)_{i=1}^t=\{i\in\nn: \mathbb{S}^\xi_{N,1}(i)\neq 0\}$.

\begin{proof}[Proof of Lemma \ref{finish}]$(i)$ We will repeatedly use facts from Proposition \ref{important}. Fix a block tree $(g_E)_{E\in\mathcal{S}_{\beta+1}\setminus \{\varnothing\}}\subset B_{G_{\alpha, \beta}}$ such that $\|Qg_E\|_{X_\alpha}\leqslant \ee$ for all $\varnothing\neq E\in\mathcal{S}_{\beta+1}$.   Combining the fact that  $Sz(X^2_\beta)=\omega^{\beta+1}$ with Remark \ref{remark}, the canonical basis of $X_\beta^2$ has no subsequence which is an $\ell_1^{\beta+1}$ spreading model.  Since the canonical basis of $X_\beta^2$ is shrinking and normalized, it is weakly null. Since the basis is weakly null and has no subsequence which is an $\ell_1^{\beta+1}$-spreading model, for any $\delta>0$, it follows from \cite[Theorem $4.12$]{CN} that there exists $L\in[\nn]$ such that for any $N\in[L]$, $$\Bigl\|\sum_{i=1}^\infty \mathbb{S}^{\beta+1}_{N, 1}(i)e_i\Bigr\|_{X_\beta^2}<\delta.$$  Let $n_1=\min L$ and recursively choose $n_1<n_2<\ldots$, $n_i\in L$, such that if  $1<t\in\nn$ is such that $(n_1, \ldots, n_t)\in \mathcal{S}_{\beta+1}$, then $\max \ran(g_{(n_1, \ldots, n_{t-1})})<n_t$.   Since $\mathcal{S}_{\beta+1}$ contains no infinite, $\preceq$-ascending chains, there exists $t\in \nn$ such that $E:=(n_i)_{i=1}^t$ is the maximal initial segment of $N$ which lies in $\mathcal{S}_{\beta+1}$. By the properties of the repeated averages hierarchy, $E=\{i: \mathbb{S}^{\beta+1}_{N,1}(i)\neq 0\}$ and $\sum_{i=1}^t \mathbb{S}^{\beta+1}_{N,1}(n_i)=1$.     Let $$g=\sum_{i=1}^t \mathbb{S}^{\beta+1}_{N,1}(n_i)g_{(n_1, \ldots, n_i)}\in \text{co}(g_F: \varnothing\prec F\preceq E).$$  Our goal is to estimate $\|g\|_{G_{\alpha, \beta}}$. For convenience, for $i=1, \ldots, t$,  let $g_i=g_{(n_1, \ldots, n_i)}$  and $w_i=\mathbb{S}^{\beta+1}_{N,1}(n_i)$. Then $g=\sum_{i=1}^t w_ig_i$. 

We fix intervals $I_1<I_2<\ldots$ such that $$\|g\|_{G_{\alpha, \beta}} = \Bigl\|\sum_{n=1}^\infty \|QI_ng\|_{X_\alpha}e_{\min I_n}\Bigr\|_{X^2_\beta}.$$  By including additional $I_n$ if necessary and using $1$-unconditionality of the canonical $X_\beta^2$ basis, we may assume that $\nn=\cup_{n=1}^\infty I_n$.    For each $n\in\nn$, let $A_n=\{i\in \{1, \ldots, t\}: \ran(g_i)\subset I_n\}$ and $A=\cup_{n=1}^\infty A_n$, $B=\{1, \ldots, t\}\setminus A$.   Then using the pairwise disjointness of $A_1, A_2, \ldots$ together with the fact that $\|Qg_i\|_{X_\alpha}\leqslant \ee$ for each $1\leqslant i\leqslant t$ and $I_ng_i=g_i$ for all $n\in\nn$ and $i\in A_n$, \begin{align*} \|g\|_{G_{\alpha, \beta}} & \leqslant \Bigl\|\sum_{n=1}^\infty \|QI_n \sum_{i\in A}w_ig_i\|_{X_\alpha} e_{\min I_n}\Bigr\|_{X^2_\beta} + \Bigl\|\sum_{n=1}^\infty \|QI_n \sum_{i\in B}w_ig_i\|_{X_\alpha} e_{\min I_n}\Bigr\|_{X^2_\beta} \\ & \leqslant \sum_{n=1}^\infty \sum_{i\in A_n} w_i\|QI_ng_i\|_{X_\alpha}+ \Bigl\|\sum_{n=1}^\infty \|QI_n \sum_{i\in B}w_ig_i\|_{X_\alpha} e_{\min I_n}\Bigr\|_{X^2_\beta} \\ & = \sum_{n=1}^\infty \sum_{i\in A_n} w_i\|Qg_i\|_{X_\alpha}+ \Bigl\|\sum_{n=1}^\infty \|QI_n \sum_{i\in B}w_ig_i\|_{X_\alpha} e_{\min I_n}\Bigr\|_{X^2_\beta} \\ &\leqslant \ee+ \Bigl\|\sum_{n=1}^\infty \|QI_n \sum_{i\in B}w_ig_i\|_{X_\alpha} e_{\min I_n}\Bigr\|_{X^2_\beta}. \end{align*} If $B=\varnothing$, then the second sum here is zero and we are done.  Assume $B\neq \varnothing$. Since $\nn=\cup_{n=1}^\infty I_n$, it follows that for each $i\in B$, there exist at least two values of $n\in\nn$ such that $I_ng_i\neq 0$. Enumerate $B=(b_1, \ldots, b_s)$ and let $B_1=(b_i: i\leqslant s, i\text{\ odd})$ and $B_2=(b_i: i\leqslant s, i\text{\ even})$.    For $i\in B$, let  $$C_i=\{n\in\nn: I_n g_i\neq 0\}.$$  Note that the sets $(C_i)_{i\in B_1}$ are pairwise disjoint, as are the sets $(C_i)_{i\in B_2}$. To see this, note that if $b_i, b_j\in B_1$ with $i<j$ and $n\in C_{b_i}\cap C_{b_j}$, then since $i,j$ are both odd, $b_i<b_{i+1}<b_j$, and  $$\ran(g_{b_{i+1}})\subset (\max \ran(g_{b_i}), \min \ran(g_{b_j}))\subset I_n.$$ But this means $b_{i+1}\in A_n$,  contradicting the fact that $b_{i+1}\in B=\{1, \ldots, t\}\setminus \cup_{m=1}^\infty A_m$. A similar argument yields that $(C_i)_{i\in B_2}$  are pairwise disjoint.  

We now turn to estimating $$\Bigl\|\sum_{n=1}^\infty \|QI_n \sum_{i\in B_1} w_ig_i\|_{X_\alpha}e_{\min I_n}\Bigr\|_{X^2_\beta} = \Bigl\|\sum_{i\in B_1} w_i \sum_{n\in C_i}\|QI_ng_i\|_{X_\alpha}e_{\min I_n}\Bigr\|_{X^2_\beta}.$$   For each $i\in B_1$ and $n\in C_i$, let $J_n=\ran(g_i)\cap I_n\neq \varnothing$.  Let $h_i=\sum_{n\in C_i} \|QJ_n g_i\|_{X_\alpha} e_{\min J_n}$ and $h'_i=\sum_{n\in C_i}\|QI_n g_i\|_{X_\alpha}e_{\min I_n}$. It follows from this definition that $\|h_i\|_{X_\beta^2}\leqslant \|g_i\|_{G_{\alpha, \beta}}\leqslant 1$. Moreover,  $\ran(h_i)\subset \ran(g_i)$. For each $i\in B_1$, $\|QJ_ng_i\|_{X_\alpha}=\|QI_ng_i\|_{X_\alpha}$ and $\min J_n\geqslant \min I_n$ for each $n\in C_i$. Also, by our choice of $n_1, \ldots, n_t$,  \begin{align*} n_1 & \leqslant \min \ran(g_1)=\min \ran(g_{(n_1)}) < n_2 \leqslant \min \ran(g_2)=\min \ran(g_{(n_1, n_2)}) <\ldots \\ & <n_t \leqslant \min \ran(g_t)=\min \ran(g_{(n_1, \ldots, n_t)}).\end{align*}  Therefore by the definition of $h_i'$, $1$-right dominance, Proposition \ref{important}$(v)$, the properties of $\mathbb{S}^{\beta+1}_{N,1}$, and our choice of $N$, \begin{align*}\Bigl\|\sum_{i\in B_1} w_i \sum_{n\in C_i}\|QI_ng_i\|_{X_\alpha}e_{\min I_n}\Bigr\|_{X^2_\beta} & = \Bigl\|\sum_{i\in B_1} w_i h'_i\Bigr\|_{X_\beta^2} \leqslant \Bigl\|\sum_{i\in B_1} w_ih_i\Bigr\|_{X^2_\beta} \leqslant 4\Bigl\|\sum_{i=1}^t w_ie_{n_i} \Bigr\|_{X^2_\beta} \\ & = 4\Bigl\|\sum_{i=1}^t \mathbb{S}^{\beta+1}_{N,1}(n_i)e_{n_i}\Bigr\|_{X^2_\beta} = 4\Bigl\|\sum_{i=1}^\infty \mathbb{S}^{\beta+1}_{N,1}(i)e_i\Bigr\|_{X^2_\beta} \\ & <4\delta. \end{align*}

An identical argument yields that \begin{align*} \Bigl\|\sum_{n=1}^\infty \|QI_n \sum_{i\in B_2} w_ig_i\|_{X_\alpha}e_{\min I_n}\Bigr\|_{X^2_\beta} <4\delta. \end{align*} Therefore \begin{align*} \|g\|_{G_{\alpha, \beta}} & \leqslant \ee+ \Bigl\|\sum_{n=1}^\infty \|QI_n \sum_{i\in B}w_ig_i\|_{X_\alpha} e_{\min I_n}\Bigr\|_{X^2_\beta} \\ &  \leqslant  \ee+ \Bigl\|\sum_{n=1}^\infty \|QI_n \sum_{i\in B_1}w_ig_i\|_{X_\alpha} e_{\min I_n}\Bigr\|_{X^2_\beta} + \Bigl\|\sum_{n=1}^\infty \|QI_n \sum_{i\in B_2}w_ig_i\|_{X_\alpha} e_{\min I_n}\Bigr\|_{X^2_\beta} \\ & < \ee+8\delta.\end{align*} Since $\delta>0$ was arbitrary, we are done.

$(ii)$ By \cite[Proposition $5$]{OSZ}, it is sufficient to show that for any $\ee>0$ and any weakly null tree $(h_E)_{E\in \mathcal{S}_{\beta+1}\setminus \{\varnothing\}}\subset B_{H_{\alpha, \beta}}$, there exist $E\in \mathcal{S}_{\beta+1}$ and $h\in \text{co}(h_F: \varnothing \prec F\preceq E)$ such that $\|h\|_{G_{\alpha,\beta}}<3\ee$.     Fix such a weakly null tree. By standard perturbation and pruning arguments, we may assume there is a block tree $(g_E)_{E\in \mathcal{S}_{\beta+1}}\subset B_{G_{\alpha, \beta}}$ such that $\|g_E-h_E\|_{G_{\alpha, \beta}}<\ee$ for all $E\in \mathcal{S}_{\beta+1}\setminus \{\varnothing\}$.  This yields that for each $E\in \mathcal{S}_{\beta+1}\setminus \{\varnothing\}$, $$\|Qg_E\|_{X_\alpha} =\|Qg_E-Qh_E\|_{X_\alpha} \leqslant \|g_E-h_E\|_{X_\alpha} \leqslant \ee.$$   By $(i)$, we may choose $\varnothing \neq E\in \mathcal{S}_{\beta+1}$ and non-negative numbers $(w_F)_{\varnothing\prec F\preceq E}$ such that $\sum_{\varnothing\prec F\preceq E} w_F=1$ and $\|\sum_{\varnothing\prec F\preceq E}w_F g_F\|_{G_{\alpha, \beta}}<2\ee$.   Then $$\Bigl\|\sum_{\varnothing \prec F\preceq E}w_Fh_F\Bigr\|_{G_{\alpha, \beta}} \leqslant \Bigl\|\sum_{\varnothing \prec F\preceq E}w_Fg_F\Bigr\|_{G_{\alpha, \beta}} + \sum_{\varnothing\prec F\preceq E}w_F\|h_F-g_F\|_{G_{\alpha, \beta}} <3\ee.$$   This finishes $(ii)$.

$(iii)$  Fix $0<\ee<1$.   We can recursively select positive integers $p_1<q_1<p_2<q_2<\ldots$ such that $\|f_{p_i}-f_{q_i}\|_{X_\alpha}<1-\ee$ for all $i\in\nn$. Since $Q$ is a quotient map, we can choose $(g_i)_{i=1}^\infty\subset B_{G_{\alpha, \beta}}$ such that $\|g_i\|_{G_{\alpha, \beta}}<1-\ee$ and $Qg_i=f_{q_i}-f_{p_i}$. Therefore  $h_i := e_{p_i}-e_{q_i}-g_i\in H_{\alpha, \beta}$.  By the triangle inequality, $\|h_i\|_{G_{\alpha, \beta}}\leqslant 3$.           Fix  $E\in \mathcal{S}_\beta$ and scalars $(b_i)_{i\in E}$ such that $\sum_{i\in E}|b_i|=1$ and note that  $(p_i:i\in E)\in \mathcal{S}_\beta$, since $\mathcal{S}_\beta$ is spreading. Let  \begin{displaymath}
   a_j = \left\{
     \begin{array}{lr}
       b_i & : j=p_i \text{\ for some\ }i\in E\\
       -b_i & : j=q_i \text{\ for some\ }i\in E\\
			0 & : \text{\ otherwise}.
     \end{array}
   \right.
\end{displaymath}  Let $I_i=\{p_i\}$ for $i\in E$ and note that \begin{align*} \Bigl\|\sum_{i\in E} b_i (e_{p_i}-e_{q_i}-g_i)\Bigl\|_{G_{\alpha, \beta}} & \geqslant \Bigl\|\sum_{i\in E} b_i (e_{p_i}-e_{q_i})\Bigl\|_{G_{\alpha, \beta}}-(1-\ee) \geqslant \Bigl\|\sum_{i \in E} \|\sum_{j\in I_i}a_jf_j\|_{X_\alpha} e_{p_i}\Bigr\|_{X_\beta^2} -(1-\ee) \\ & = \Bigl\|\sum_{j\in E} |b_i|e_{p_i}\Bigl\|_{X^2_\beta}-(1-\ee)= 1-(1-\ee)=\ee.\end{align*}   This yields that $(h_i)_{i=1}^\infty$ is an  $\ell_1^\beta$-spreading model.  By Remark \ref{remark}, $Sz(H_{\alpha, \beta})>\omega^\beta$, and $Sz(H_{\alpha, \beta})\geqslant \omega^{\beta+1}$ by \cite[Proposition $3.3$]{Lancien}.   Combining this estimate with  $(ii)$, we deduce that  $Sz(H_{\alpha, \beta})=\omega^{\beta+1}$.

\end{proof}

\begin{proof}[Proof of Theorems \ref{main1} and \ref{main2}] Fix $0<\xi<\omega_1$ such that $$\xi\notin \{\omega^\gamma: \gamma<\omega_1\}.$$  By standard properties of ordinals, there exist $\alpha, \beta<\xi$ such that $\alpha+\beta=\xi$. Let $G=G_{\alpha,\beta}$ and $H=H_{\alpha,\beta}$.

 Note that $G/H\approx X_\alpha$, so $Sz(G/H)=Sz(X_\alpha)=\omega^{\alpha+1}\leqslant \omega^\xi$. Here we are using the fact that  the Szlenk index is an isomorphic invariant (see \cite{Lancien}).  Since $G/H\approx X_\alpha$, $G/H$ admits no $\ell_1^{\alpha+1}$-spreading model. Since $\alpha+1\leqslant \xi$, as noted prior to Remark \ref{remark}, $G/H$ does not admit an $\ell_1^\xi$-spreading model.   It follows from Lemma \ref{finish} that $Sz(H)\leqslant \omega^{\beta+1}\leqslant \omega^\xi$ and that $Sz(H)$ admits no $\ell_1^{\beta+1}$-spreading model.  Since $\beta+1\leqslant \xi$, $H$ admits no $\ell_1^\xi$-spreading model.  By Remark \ref{remark}, we will be done once we show that $G$ admits an $\ell_1^\xi$-spreading model. Since $\alpha+\beta=\xi$, we will be done once we show that $G$ admits an $\ell_1^{\alpha+\beta}$-spreading model.

Fix $0<\ee<1$ and choose positive integers $m_1<m_2<\ldots$ such that for all $i\in\nn$, $\|f_{m_i}-e_i\|_{X_\alpha}<1-\ee$. Fix $E\in\mathcal{S}_\beta [\mathcal{S}_\alpha]$ and write $E=\cup_{n=1}^m E_n$ with $E_1<\ldots <E_m$, $\varnothing \neq E_n\in \mathcal{S}_\alpha$, $(\min E_n)_{n=1}^m \in \mathcal{S}_\beta$.  Let $F=\{m_i: i\in E\}$.    For each $1\leqslant n\leqslant m$, let $I_n$ be the smallest interval containing $\{m_i: i\in E_n\}$. Note that $(\min I_n)_{n=1}^m$ is a spread of $(\min E_n)_{n=1}^m$, since $\min I_n=m_{\min E_n}$, from which it follows that $(\min I_n)_{n=1}^m\in \mathcal{S}_\beta$.   Fix  scalars $(a_i)_{i\in E}$ such that $\sum_{i\in E}|a_i|=1$ and let $b_{m_i}=a_i$ for $i\in E$ and $b_i=0$ for $i\in\nn\setminus F$. Then  \begin{align*} \Bigl\|\sum_{i\in E} a_ie_{m_i}\Bigr\|_{G_{\alpha,\beta}}  & = \Bigl\|\sum_{i=1}^\infty b_ie_i\Bigr\|_{G_{\alpha,\beta}} \geqslant \Bigl\|\sum_{n=1}^m \|\sum_{i\in I_n} b_if_i\|_{X_\alpha} e_{\min I_n}\Bigr\|_{X_\beta^2} =\sum_{n=1}^m \Bigl\|\sum_{i\in I_n} b_if_i\Bigr\|_{X_\alpha} \\ & = \sum_{n=1}^m \Bigl\|\sum_{i\in E_n} a_if_{m_i}\Bigr\|_{X_\alpha}   \geqslant  \sum_{n=1}^m \Bigl\|\sum_{i\in E_n} a_ie_i\Bigr\|_{X_\alpha} -\sum_{i\in E} |a_i|\|f_{m_i}-e_i\|_{X_\alpha} \\ & = \sum_{n=1}^m \sum_{i\in E_n}|a_i| - \sum_{i\in E} |a_i|\|f_{m_i}-e_i\|_{X_\alpha}  > 1-(1-\ee)=\ee. \end{align*} Therefore by Remark \ref{remark}, $(e_{m_i})_{i=1}^\infty$ has a subsequence which is a an $\ell_1^{\alpha+\beta}$-spreading model.  By the criteria established at the end of the preceding paragraph, we are done.

\end{proof}

\end{document}